\documentclass{amsart}
\usepackage{latexsym}
\usepackage{amstext}
\usepackage{amsmath}
\usepackage{amssymb}
\usepackage{amsthm}
\usepackage{mathrsfs}
\usepackage{url}
\usepackage{pictexwd,dcpic}

\newtheorem{theorem}{Theorem}
\newtheorem{lemma}[theorem]{Lemma}
\newtheorem{corollary}[theorem]{Corollary}

\newtheorem{problem}[theorem]{Problem}

\theoremstyle{definition}
\newtheorem{definition}[theorem]{Definition}

\DeclareMathOperator{\ord}{ord}

\def \O {\mathcal{O}}
\def \pt {\rm{pt}}

\def \kbar {\overline{k}}
\def \Lbar {\overline{L}}

\DeclareMathOperator{\dv}{div}
\DeclareMathOperator{\Jac}{Jac}
\DeclareMathOperator{\Prym}{Prym}
\DeclareMathOperator{\im}{im}
\DeclareMathOperator{\chr}{char}

\DeclareMathOperator{\Div}{Div}
\DeclareMathOperator{\Nm}{Nm}
\DeclareMathOperator{\Norm}{N}
\DeclareMathOperator{\CR}{CR}
\DeclareMathOperator{\GL}{GL}
\DeclareMathOperator{\disc}{Disc}

\begin{document}
\bibliographystyle{amsplain}
\title{Siegel's Theorem and the Shafarevich Conjecture}
\author{Aaron Levin}
\address{Department of Mathematics\\Michigan State University\\East Lansing, MI 48824}
\email{adlevin@math.msu.edu}
\date{}
\begin{abstract}
It is known that in the case of hyperelliptic curves the Shafarevich conjecture can be made effective, i.e., for any number field $k$ and any finite set of places $S$ of $k$, one can effectively compute the set of isomorphism classes of hyperelliptic curves over $k$ with good reduction outside $S$.  We show here that an extension of this result to an effective Shafarevich conjecture for {\it Jacobians} of hyperelliptic curves of genus $g$ would imply an effective version of Siegel's theorem for integral points on hyperelliptic curves of genus $g$.
\end{abstract}

\maketitle

\section{Introduction}

The famous 1929 theorem of Siegel \cite{Sie} on integral points on affine curves states (in a formulation convenient for us):
\begin{theorem}[Siegel]
Let $C$ be a curve over a number field $k$, $S$ a finite set of places of $k$ containing the archimedean places, $\O_{k,S}$ the ring of $S$-integers, and $f\in k(C)$ a nonconstant rational function on $C$.  If $C$ is a rational curve then we assume further that $f$ has at least three distinct poles.  Then the set of $S$-integral points of $C$ with respect to $f$,
\begin{equation*}
C(f,k,S)=\{P\in C(k)\mid f(P)\in \O_{k,S}\},
\end{equation*}
is finite.
\end{theorem}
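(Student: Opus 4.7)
The plan is to follow the classical strategy combining the Mordell--Weil theorem for $\Jac(C)$ with Roth's theorem in Diophantine approximation. After replacing $C$ by its smooth projective model (still denoted $C$), I view $f$ as a morphism $C \to \mathbb{P}^1$ whose pole divisor is supported on a finite set $\{Q_1,\ldots,Q_r\} \subset C(\overline{k})$; then $P \in C(f,k,S)$ precisely means that $P$ does not reduce to any $Q_i$ at any place $v \notin S$. Assuming $C(f,k,S)$ is infinite, a standard compactness-plus-pigeonhole argument on the finite sets $S$ and $\{Q_1,\ldots,Q_r\}$ produces an infinite subsequence $\{P_n\}$, a place $v \in S$, and a pole $Q_i$ with $P_n \to Q_i$ in the $v$-adic topology and $|f(P_n)|_v \to \infty$.

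In the genus zero case, where the hypothesis gives $d := \deg f \geq 3$, the argument concludes quickly. Using the local expansion of $f$ at $Q_i$ (with ramification index $e \leq d$) one has $|P_n - Q_i|_v \asymp |f(P_n)|_v^{-1/e}$, while $S$-integrality and the product formula force $|f(P_n)|_v$ to be bounded below by a positive power of the height, roughly $H(P_n)^{d-\varepsilon}$. Combining these yields a $v$-adic rational approximation of the algebraic number $Q_i$ by the $P_n$ with exponent strictly greater than $2$, contradicting Roth's theorem.

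When $g(C) \geq 1$, the pole hypothesis is dispensed with by enlarging the pole set through an unramified cover of $C$. After possibly extending $k$ to acquire a base point $P_0 \in C(k)$, fix an integer $N \geq 3$, let $j : C \to \Jac(C)$ denote the Abel--Jacobi map associated to $P_0$, and form the fiber product
\[
C' \;=\; C \times_{\Jac(C),\,[N]} \Jac(C),
\]
yielding a finite étale cover $\pi : C' \to C$ of degree $N^{2g}$ on which the pullback $f \circ \pi$ has $N^{2g} r \geq 3$ distinct poles. The Chevalley--Weil theorem, combined with the finiteness of $\Jac(C)(k)/N\Jac(C)(k)$ furnished by Mordell--Weil, ensures that every point of $C(f,k,S)$ lifts to a point of $C'(k')$ that is $S'$-integral with respect to $f \circ \pi$, for a fixed finite extension $k'/k$ and a fixed finite set of places $S'$ of $k'$, depending only on $C$, $f$, $S$, and $N$. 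The genus-zero argument then applies to $f \circ \pi$ on $C'$ and the lifted subsequence, producing the desired contradiction.

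The main obstacle is the descent step in the higher-genus case: one must check rigorously that the Chevalley--Weil procedure preserves $S$-integrality with only a controlled enlargement of $k$ and $S$, and that on $C'$ the pullback $f \circ \pi$ genuinely acquires $\geq 3$ distinct poles---which uses that $\pi$ is étale, so that the poles of $f$ pull back without collision. Once this machinery is in place, the reduction of the higher-genus case to Roth's theorem is automatic, and the finiteness of $C(f,k,S)$ follows.
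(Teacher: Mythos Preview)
The paper does not prove Siegel's theorem; it is stated in the introduction as a classical result with a reference to Siegel's 1929 paper, and serves purely as background.  So there is no proof in the paper to compare yours against, and I will assess your sketch on its own.

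Your overall strategy is the classical one, and the genus-zero paragraph is essentially right, though the exponent bookkeeping is loose: the inequality $|f(P_n)|_v \gtrsim H(P_n)^{d-\varepsilon}$ does not hold at a single place $v$ --- one must sum over all $v\in S$ and invoke the several-place form of Roth, or else reduce to the $S$-unit equation.  The serious gap is in the genus $\geq 1$ step.  After passing to the \'etale cover $\pi:C'\to C$ you assert that ``the genus-zero argument then applies to $f\circ\pi$ on $C'$\,''.  But $C'$ has genus $N^{2g}(g-1)+1\geq 1$, so the argument you wrote for $\mathbb{P}^1$ (which relied on the standard height on $\mathbb{P}^1$ and the relation $H(f(P))\asymp H(P)^d$) does not transfer.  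On a positive-genus curve, having $\geq 3$ poles is \emph{not} by itself enough to force a Roth contradiction: any map $C'\to\mathbb{P}^1$ through which you invoke Roth has degree at least the gonality of $C'$, and this factor cancels the gain from the extra poles.

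What actually makes the cover useful --- and what your write-up never invokes --- is the height amplification.  Embedding $C$ and $C'$ in $\Jac(C)$ and using the canonical height $\hat h$, a lift $P_n'\in C'(k')$ of $P_n$ satisfies $\hat h(P_n')\asymp \hat h(P_n)/N^{2}$, because $[N]^{*}\hat h=N^{2}\hat h$.  Since $\pi$ is \'etale, the local proximity to a lifted pole is unchanged: $\lambda_v(P_n',Q')=\lambda_v(P_n,Q)+O(1)$.  Hence the ratio $\lambda_v(P_n',Q')/\hat h(P_n')$ is roughly $N^{2}$ times the corresponding ratio on $C$, and for $N$ sufficiently large (not merely $N\geq 3$) this exceeds the Roth threshold after a final projection $C'\to\mathbb{P}^1$ unramified at $Q'$.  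This $N^{2}$ height reduction is the entire point of Siegel's covering trick; without it, the higher-genus case of your argument does not close.
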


While Siegel's theorem is completely satisfactory from a qualitative viewpoint, all known proofs of the theorem suffer from the defect of being ineffective, i.e., in general there is no known algorithm for explicitly computing the set $C(f,k,S)$ (when it is finite).  In the classical proofs of Siegel's theorem, this ineffectivity arises from the use of Roth's theorem from Diophantine approximation (for a survey on the use of Roth's theorem in Siegel's theorem, including some remarks on effectivity, see \cite{Zan}).  Finding an effective version of Siegel's theorem remains a longstanding important open problem.

Of course, in certain special cases there are known techniques for effectively computing $C(f,k,S)$.  In this context, the most powerful and widely used effective techniques come from Baker's theory of linear forms in logarithms \cite{Bak}.  Using these techniques, Baker and Coates \cite{BC} proved an effective version of Siegel's theorem for curves of genus zero and genus one.  Already for curves of genus two, however, it is an open problem to prove an effective version of Siegel's theorem.  More generally, we will be interested here in studying this problem for hyperelliptic curves:

\begin{problem}
\label{mp}
Find an effective version of Siegel's theorem for curves $C$ of genus two.  More generally, find an effective version of Siegel's theorem for hyperelliptic curves $C$.
\end{problem}

Again, in certain special cases, Problem \ref{mp} has been solved.  For instance, if $C$ is a (nonsingular projective) hyperelliptic curve over a number field $k$, $i$ denotes the hyperelliptic involution of $C$, and $f\in k(C)$ is a rational function that has a pole at both $P$ and $i(P)$ for some point $P\in C(\kbar)$ (where we allow $P=i(P)$), then $C(f,k,S)$ is effectively computable for any appropriate finite set of places $S$.  This is essentially equivalent to effectively finding all solutions $x,y\in \O_{k,S}$ to hyperelliptic equations of the form
\begin{equation}
\label{hypeq}
y^2=a_nx^n+\cdots+a_0, \quad x,y\in \O_{k,S},
\end{equation}
where $a_0,\ldots, a_n\in k$ are constants and the equation defines a hyperelliptic curve.  Explicit bounds for the solutions to \eqref{hypeq} (when $\O_{k,S}=\mathbb{Z}$) were first given by Baker \cite{Bak2}.  Along the lines of equation \eqref{hypeq}, we note that using Riemann-Roch it is possible to restate Problem \ref{mp} as a question about integral solutions to certain specific types of equations.  For instance, if $C$ has genus two, Problem \ref{mp} is equivalent to solving equations of the form \eqref{hypeq} (with $n=5,6$) and equations of the form
\begin{equation*}
y^3+g_1(x)y^2+g_2(x)y=x^4+g_3(x), \quad x,y\in \O_{k,S},
\end{equation*}
where $g_i\in k[x]$ has degree $\leq i$.  We refer the reader to \cite{Grant} or \cite{Pou} for details.

When $C$ has genus two, Bilu \cite{Bilu} has shown that there exists an infinite set $M\subset C(\kbar)$ such that if $f$ has poles at two distinct points of $M$, then $C(f,k,S)$ is effectively computable.  The method of \cite{Bilu} combines linear forms in logarithms with functional units and coverings of curves, and the exact limitations of the method do not yet seem to be fully understood.  Another interesting alternative approach to an effective Siegel's theorem for genus two curves is given by Grant in \cite{Grant}.  There, the problem is reduced to questions about integral points on a certain affine subset of the Jacobian of $C$, which in turn are reduced to certain ``non-Abelian $S$-unit equations".

A final case of particular note where an effective version of Siegel's theorem is known is the case of geometrically Galois coverings of the projective line, proved independently by Bilu \cite{Bilu2} and Dvornicich and Zannier \cite{DZ} (partial results were also obtained in \cite{Kl} and \cite{Pou}).  This generalizes, in a qualitative way, the aforementioned results of Baker and Coates \cite{BC} and Baker \cite{Bak2} on integral points on elliptic curves and hyperelliptic curves, respectively.  Quantitative results in this context were proven by Bilu in \cite{Bilu3}.  We refer the reader to \cite{Bilu} for more general statements and other cases of an effective Siegel's theorem.  

When $C$ is a curve of genus $g\geq 2$, Siegel's theorem is superseded by Faltings' theorem (Mordell's Conjecture), which states that in this case the set of rational points $C(k)$ is finite for any number field $k$.  Faltings' proof of the Mordell conjecture \cite{Fal} used a reduction (due to Parshin) to the Shafarevich conjecture, proved by Faltings in the same paper.  Moreover, R\'emond has shown \cite{Rem} that an effective version of the Shafarevich conjecture would imply an effective version of Faltings' theorem.  In a similar vein, we will show that Problem \ref{mp} can be reduced to proving an effective version of a restricted form of the Shafarevich conjecture (hyperelliptic Jacobians).  For instance, we will show that an effective version of Siegel's theorem for genus two curves follows from an effective version of the Shafarevich conjecture for abelian surfaces.  

Before stating the main theorem, we make a few more definitions.  For a nonsingular projective curve $C$, we let $\Jac(C)$ denote the Jacobian of $C$.  Let $g\geq 2$ be an integer, $k$ a number field, and $S$ a finite set of places of $k$, which we will always assume contains the archimedean places.  Define
\begin{multline*}
\mathcal{H}(g,k,S)=\{\text{$k$-isomorphism classes of (nonsingular projective) hyperelliptic }\\
\text{curves over $k$ of genus $g$ with good reduction outside $S$}\}
\end{multline*}
and
\begin{multline*}
\mathcal{H}'(g,k,S)=\{\text{$k$-isomorphism classes of hyperelliptic curves $C$ over $k$ of genus $g$}\\
\text{  such that $\Jac(C)$ has good reduction outside $S$}\}.
\end{multline*}

As is well known, if $C$ has good reduction at a place $v$ then so does $\Jac(C)$.  In other words, we have $\mathcal{H}(g,k,S)\subset \mathcal{H}'(g,k,S)$.  In general, this set inclusion is proper.  In fact, as we will see, understanding the difference between the two sets is in some sense the key to solving Problem \ref{mp}.  It follows from the Shafarevich conjecture for abelian varieties, proved by Faltings, that the set $\mathcal{H}'(g,k,S)$ is finite.  We now state our main result.

\begin{theorem}
\label{mt}
Let $g\geq 2$ be an integer.  Suppose that for any number field $k$ and any finite set of places $S$ of $k$ the set $\mathcal{H}'(g,k,S)$ is effectively computable (e.g., an explicit hyperelliptic Weierstrass equation for each element of the set is given).  Then for any number field $k$, any finite set of places $S$ of $k$, any hyperelliptic curve $C$ over $k$ of genus $g$, and any rational function $f\in k(C)$, the set of $S$-integral points with respect to $f$,
\begin{equation*}
C(f,k,S)=\{P\in C(k)\mid f(P)\in \O_{k,S}\},
\end{equation*}
is effectively computable.
\end{theorem}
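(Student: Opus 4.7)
The plan is to associate to each $S$-integral point $Q\in C(f,k,S)$ a hyperelliptic curve $C_Q$ of genus $g$, defined over an effectively controlled extension $k'\supset k$, whose Jacobian has good reduction outside an effectively computable finite set $S'\supseteq S$. The hypothesis of the theorem then supplies an effective finite list for $\mathcal{H}'(g,k',S')$ in which $C_Q$ must lie, and from that list one recovers $Q$ by a short backward pass.

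First I would put things in normal form: after an effective extension of $k$ and enlargement of $S$, assume $C$ has a Weierstrass model $y^2=h(x)$ with good reduction outside $S$, and decompose $f\in k(C)$ as $f=A(x)+yB(x)$. When the pole divisor $\dv_\infty(f)$ is stable under the hyperelliptic involution $i$ (in particular when $B\equiv 0$), the problem becomes equivalent, via Riemann--Roch, to solving an $S$-integer hyperelliptic equation $y^2=H(x)$, which is done effectively by Baker \cite{Bak2}. So I reduce to the case in which $f$ has a pole at some $P\in C(k)$ with $i(P)\ne P$ and $f$ is regular at $i(P)$. A standard effective Chevalley--Weil-type step replaces $S$-integrality of $f$ by $S$-integrality with respect to the divisor $\dv_\infty(f)$, at the cost of further effectively enlarging $S$. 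The natural candidate for the associated curve is to modify the Weierstrass polynomial of $C$ using the data of $Q=(x_0,y_0)$: for each $k$-rational Weierstrass $x$-coordinate $e$ of $C$, form the hyperelliptic curve $C_Q^{(e)}\colon w^2=(h(x)/(x-e))\,(x-x_0)$, still of genus $g$; one would then work with the packet of all such $C_Q^{(e)}$ as $e$ ranges over the Weierstrass $x$-coordinates.

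The main task is to show that $\Jac(C_Q^{(e)})$ has good reduction at every place $v\notin S'$ for an effectively computable $S'\supseteq S$ depending only on $k,S,C,f$. Bad reduction at $v\notin S$ amounts to the condition that $x_0\pmod v$ equals some Weierstrass $x$-coordinate $e'\ne e$, equivalently that $Q$ reduces to a Weierstrass point at $v$. Here the \emph{asymmetry} of $f$ at $P$ versus $i(P)$ is essential, because in the symmetric case Siegel's theorem is classically ineffective in exactly this direction. I would attempt to extract the needed bound by introducing an auxiliary Kummer-type cover $\widetilde C\to C$ tailored to the pair $\{P,i(P)\}$, on which the pullback of $f$ acquires a symmetric pole pattern, thereby transferring integrality of $f$ to integrality of a function on $\widetilde C$ which effectively bounds the valuations $v(y_0)$ and hence the primes of bad reduction of the $C_Q^{(e)}$.

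This last point is the principal obstacle: translating the one-parameter integrality $f(Q)\in \O_{k,S}$ into an effective bound on $v$-adic Weierstrass reductions of $Q$ for $v\notin S$. Any successful execution must use the asymmetry of $f$'s poles decisively, since it is the only feature distinguishing our situation from the Roth-ineffective one. Granting this step, I apply the hypothesis to the packet $\{C_Q^{(e)}\}\subset \mathcal{H}'(g,k',S')$ to obtain a finite effective list; from each curve in the list I read off its Weierstrass $x$-coordinates, which yields the candidates for $x_0$ up to the finite ambiguity in which $e$ was removed; then $y_0^2=h(x_0)$ gives finitely many $y_0$, each tested against $f(Q)\in\O_{k,S}$, producing the desired effective computation of $C(f,k,S)$.
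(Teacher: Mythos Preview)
Your proposal has a genuine gap at exactly the point you flag as the ``principal obstacle,'' and the fix you sketch does not close it.

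For your curve $C_Q^{(e)}:w^2=(h(x)/(x-e))(x-x_0)$, bad reduction at $v\notin S$ occurs precisely when $x_0$ collides modulo $v$ with some Weierstrass abscissa $\alpha_j\neq e$. But $S$-integrality of $f(Q)$ only tells you that $Q$ does not reduce modulo $v$ to a \emph{pole of $f$}. In the very case you isolate, the pole $P$ satisfies $i(P)\neq P$, hence $P$ is \emph{not} a Weierstrass point; there is no reason whatsoever that $f(Q)\in\O_{k,S}$ should prevent $Q$ from reducing to Weierstrass points at infinitely many $v$. Your suggested Kummer cover ``tailored to $\{P,i(P)\}$'' cannot create such a link: symmetrising the pole divisor upstairs would at best reduce you to a Baker-type problem on a higher-genus curve (the same open problem), and in any case produces no control over the $v$-adic distance from $Q$ to the Weierstrass locus, which is orthogonal to the pole set of $f$.

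The paper's argument avoids this obstacle by choosing the auxiliary curve so that integrality of $f$ is \emph{exactly} the reduction condition needed. To each integral point $P$ one attaches a double cover $\tilde{C}_P\to C$ ramified precisely at $P$ and at the chosen pole $Q$ of $f$; since $f(P)\in\O_{k,S}$ forces $P\not\equiv Q\pmod v$ for $v\notin T$, Lemma~\ref{OS} gives $\tilde{C}_P$ good reduction outside a fixed $T$. The associated hyperelliptic curve is then not a naive swap of one Weierstrass root for $x_P$, but the Prym of $\tilde{C}_P/C$: by Dalaljan--Mumford this Prym is the Jacobian of a hyperelliptic curve $X_P$ of genus $g$ with explicit Weierstrass equation $y^2=c(x-1)\prod_i(x-\beta_i)$, $\beta_i^2=(x_Q-\alpha_i)/(x_P-\alpha_i)$, and good reduction of $\Jac(X_P)$ outside $T$ follows from Serre--Tate plus the isogeny $\Jac(\tilde{C}_P)\sim\Jac(C)\times\Prym$. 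The backward recovery then proceeds via cross-ratios of the $\beta_i$, much as you propose. The essential idea you are missing is to make the ramification locus of the auxiliary cover contain both the varying point and the pole of $f$, so that the integrality hypothesis translates directly into the separatedness needed for good reduction.
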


We now give a brief outline of the proof of Theorem \ref{mt}.  To each point $P\in C(f,k,S)$ we associate a certain double cover $\pi_P:\tilde{C}_P\to C$ such that $\tilde{C}_P$ has good reduction outside some fixed finite set of places $T$ of some number field $L$ (with $T$ and $L$ independent of $P$).  Associated to $\pi_P$ we have a classical construction, the Prym variety $\Prym(\tilde{C}_P/C)$.  When $C$ is hyperelliptic, it will turn out that $\Prym(\tilde{C}_P/C)$ is isomorphic to the Jacobian $\Jac(X_P)$ of some hyperelliptic curve $X_P$ and $\Jac(X_P)$ has good reduction outside of $T$.  This yields a map $C(f,k,S)\to \mathcal{H}'(g,L,T)$, $P\mapsto X_P$, and if $\mathcal{H}'(g,L,T)$ is known then $C(f,k,S)$ can be explicitly computed.  We note that an analogous construction essentially works for any curve $C$ (except that $\Prym(\tilde{C}_P/C)$ will not be a hyperelliptic Jacobian, or even a Jacobian, for general $C$), but we focus on the hyperelliptic case as there are reasons to believe that effectively computing $\mathcal{H}'(g,k,S)$ may be a tractable problem.

Indeed, a primary reason for this hope is that the set $\mathcal{H}(g,k,S)$ is known to be effectively computable, i.e., there is an effective version of the Shafarevich conjecture for hyperelliptic curves.
\begin{theorem}[Effective Shafarevich conjecture for hyperelliptic curves, von K\"anel \cite{vK}]
\label{tEG}
Let $g\geq 2$ be an integer, $k$ a number field, and $S$ a finite set of places of $k$.  The set $\mathcal{H}(g,k,S)$ is effectively computable.  \end{theorem}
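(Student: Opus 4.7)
The plan is to reduce the enumeration of hyperelliptic curves with good reduction to the effective enumeration of binary forms with prescribed discriminant, and then invoke Baker's theory of linear forms in logarithms. Every $C \in \mathcal{H}(g,k,S)$ admits a Weierstrass equation $y^2 = f(x)$ with $f \in k[x]$ of degree $2g+1$ or $2g+2$; homogenizing yields a binary form $F(X,Y)$ of degree $2g+2$ with nonzero discriminant, well-defined up to the natural action of $\GL_2(k)$ together with scaling by $k^*$ on $y$. Enumerating $\mathcal{H}(g,k,S)$ is therefore the same as enumerating equivalence classes of such $F$ subject to the good reduction condition.

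First, I would translate good reduction into a condition on the discriminant. Enlarging $S$ to an effectively computable finite set $S' \supset S$ (including all places above $2$), I would show that every $C \in \mathcal{H}(g,k,S)$ admits a model $y^2 = f(x)$ with $f \in \O_{k,S'}[x]$ and $\disc(f) \in \O_{k,S'}^*$. This uses the theory of minimal Weierstrass models of hyperelliptic curves (in the style of Liu) at places $v \notin S'$: a smooth integral model at $v$ supplies a Weierstrass equation whose discriminant is a $v$-adic unit, up to a square factor that the equivalence absorbs. The admissible discriminant values then lie in the finite, explicitly computable group $\O_{k,S'}^* / (\O_{k,S'}^*)^{2g+2}$.

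The central step is to enumerate, for each admissible $\delta$, the $\GL_2(\O_{k,S'})$-equivalence classes of degree-$(2g+2)$ binary forms over $\O_{k,S'}$ with discriminant $\delta$. This is an effective version of the Birch--Merriman finiteness theorem, due to Evertse and Gy\H{o}ry, whose proof reinterprets the equivalence problem as a finite system of $S$-unit (or Thue--Mahler) equations, to which Baker's method applies with explicit bounds. Finally, one prunes the resulting list: remove duplicates by an effective $k$-isomorphism test for hyperelliptic curves, and discard those candidates whose smooth model has bad reduction at some $v \in S' \setminus S$, leaving an explicit set of Weierstrass equations representing $\mathcal{H}(g,k,S)$.

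The principal obstacle is the discriminant-control step: at places of residue characteristic $2$ the theory of minimal Weierstrass models becomes delicate, and producing an explicit $S'$ together with an explicit finite list of admissible discriminants requires genuine technical work, on top of the already substantial effective bounds flowing from Baker's method through the Evertse--Gy\H{o}ry machinery. A subsidiary difficulty is making the bounds depend explicitly (and reasonably) on $g$, $k$, and $S$, so that the resulting algorithm is not merely effective in principle but can be stated uniformly.
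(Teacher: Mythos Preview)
The paper does not give its own proof of this theorem but quotes it from von K\"anel's thesis, summarizing the method as combining the effective results of Evertse and Gy\H{o}ry on binary forms of given discriminant with a result of Liu on integral Weierstrass models of hyperelliptic curves. Your proposal follows exactly this strategy---reduce good reduction to an $S'$-unit discriminant condition via Liu's theory of minimal models, then enumerate forms via Evertse--Gy\H{o}ry---so it agrees with the approach the paper attributes to von K\"anel.
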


The finiteness of $\mathcal{H}(g,k,S)$ goes back to work of Shafarevich \cite{Sha}, Merriman \cite{Mer}, Parshin \cite{Par}, and Oort \cite{Oort}.  Building on earlier work of Merriman and Smart \cite{MS}, Smart \cite{Smart} explicitly computed the set $\mathcal{H}(2,\mathbb{Q},\{2,\infty\})$ using effective results of Evertse and Gy\"ory \cite{EG} on related problems concerning discriminants of binary forms.  Using the results of Evertse and Gy\"ory \cite{EG} and a result of Liu \cite{Liu}, von K\"anel \cite{vK} proved explicit bounds for the heights of Weierstrass models of the hyperelliptic curves represented in $\mathcal{H}(g,k,S)$.  We note that the much earlier proof of Oort \cite{Oort} also yields a certain weaker version of Theorem \ref{tEG}.\footnote{Oort's proof of finiteness in \cite{Oort} (implicitly) yields a fixed computable finite extension $L$ of $k$ and effective bounds for the heights of Weierstrass models {\it over $L$} of the hyperelliptic curves represented in $\mathcal{H}(g,k,S)$.}

In \cite{Poo}, Poonen raised the problem of extending the computations of Merriman and Smart to an effective computation of $\mathcal{H}'(2,\mathbb{Q},\{2,\infty\})$.  Our results give additional motivation and importance, coming from Siegel's theorem, to the problem of extending Theorem \ref{tEG} from the set $\mathcal{H}(g,k,S)$ to the set $\mathcal{H}'(g,k,S)$.  In view of Theorems \ref{mt} and \ref{tEG}, to solve Problem \ref{mp} it would suffice to solve the following problem:

\begin{problem}
Find an effective bound $B(g,k,S)$ such that if $C$ is a hyperelliptic curve of genus $g$ over $k$ and $\Jac(C)$ has good reduction outside $S$, then $C$ has good reduction at all primes $\mathfrak{p}$ of $k$ with norm $\Norm(\mathfrak{p})> B(g,k,S)$.
\end{problem}

Note that an ineffective bound $B(g,k,S)$ follows trivially from the finiteness of $\mathcal{H}'(g,k,S)$.

\section{Parshin's construction}

Parshin \cite{Par} was the first to notice that the Mordell conjecture could be obtained as a consequence of the Shafarevich conjecture.  The basic idea is to associate to each rational point $P\in C(k)$ a covering $\pi_P:\tilde{C}_P\to C$ such that the curve $\tilde{C}_P$ has good reduction outside a finite set $S$ (independent of $P$) and $\pi_P$ has certain specified ramification.  We will use a similar construction to study integral points on curves.

\begin{theorem}
\label{Par}
Let $C$ be a nonsingular projective curve over a number field $k$, $f\in k(C)$ a nonconstant rational function, and $S$ a finite set of places of $k$ containing the archimedean places.  Fix a pole $Q\in C(\kbar)$ of $f$.  There exists a number field $L\supset k$ and a finite set of places $T$ of $L$ with the following property:   for any $P\in C(f,k,S)$ and any double cover $\pi:\tilde{C}\to C$ over $\kbar$ ramified exactly above $P$ and $Q$, there exists a morphism of nonsingular projective curves $\pi':\tilde{C}'\to C$ with the following properties:
\begin{enumerate}
\item  There is an isomorphism $\psi:\tilde{C}\to \tilde{C}'$ over $\kbar$ such that $\pi=\pi'\circ\psi$.
\item  $\pi'$ and $\tilde{C}'$ are both defined over $L$.
\item  $\deg \pi'=2$.
\item  $\pi'$ is ramified exactly above $P$ and $Q$.
\item  Both $\tilde{C}'$ and $C$ have good reduction outside $T$.
\end{enumerate}
\end{theorem}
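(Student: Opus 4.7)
The plan is to realize $\tilde{C}'$ explicitly as the double cover of $C$ associated to a square root of the line bundle $\O_C(P+Q)$ in $\Pic(C)$, and to show that such a square root, and hence the cover, can be chosen over a fixed finite extension $L/k$ and so that the resulting $\tilde{C}'$ has good reduction outside a fixed finite set $T$ of places of $L$.

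First I would fix the setup: enlarge $k$ to a finite extension $k_0$ over which $Q$, some degree-one divisor $D_0$ on $C$, and every point of $\Jac(C)[2]$ is defined, and enlarge $S$ to a set $S_0$ of places of $k_0$ containing the archimedean places, the places above $2$, the places above $S$, and the places of bad reduction of $C$ and of $\Jac(C)$. Given $P\in C(f,k,S)\subset C(f,k_0,S_0)$, the class $[P+Q-2D_0]\in \Jac(C)(k_0)$ has a square root $R\in\Jac(C)(\kbar)$ (by divisibility of $\Jac(C)(\kbar)$); writing $R$ as the class of a divisor $E$ of degree one, I pick $g\in \kbar(C)^\times$ with $\dv(g)=P+Q-2E$, and the normalization of $C$ in $\kbar(C)(\sqrt{g})$ is a double cover ramified exactly above $P$ and $Q$.

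The central step is to control the field of definition of $R$ uniformly in $P$. Fixing one choice of $R$, the Kummer cocycle $\sigma\mapsto R^\sigma - R\in \Jac(C)[2]$ defines a class in $H^1(k_0,\Jac(C)[2])$ cutting out an elementary abelian $2$-extension of $k_0$ of degree dividing $2^{2g}$. I would show that this extension is unramified outside $S_0$: at any place $v\notin S_0$, the N\'eron model of $\Jac(C)$ is an abelian scheme over $\O_v$ on which multiplication by $2$ is \'etale, so the cocycle is unramified at $v$ provided $[P+Q-2D_0]$ reduces into the image of $[2]$ on the special fibre --- which follows from $S_0$-integrality of $P$ with respect to $f$ together with the good-reduction hypotheses imposed on $k_0$ and $S_0$. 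Hermite--Minkowski then implies that the set of extensions of $k_0$ of degree at most $2^{2g}$ unramified outside $S_0$ is finite; taking $L$ to be their compositum and $T$ the places of $L$ above $S_0$ yields a field over which $R$ can be defined for every $P$. Since $\Jac(C)[2]\subset \Jac(C)(L)$, every square root of $[P+Q-2D_0]$ is defined over $L$, so given $\pi\colon \tilde{C}\to C$ I can select the unique such $R$ yielding an isomorphic cover over $\kbar$, giving properties (a)--(d).

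Finally I would verify (e), good reduction of $\tilde{C}'$ outside $T$. The key geometric input is that the branch divisor $P+Q$ remains \'etale modulo every place of $L$ outside $T$: since $f(P)\in \O_{k,S}$ while $Q$ is a pole of $f$, the points $P$ and $Q$ reduce to distinct points of $C$ at every place outside $T$. Combined with good reduction of $C$ and residue characteristic $\ne 2$, a standard discriminant computation for the double cover $\tilde{C}'\to C$ gives good reduction of $\tilde{C}'$ outside $T$. The main obstacle will be the uniform control of the Kummer cocycle in the previous paragraph: one really needs an integral form of $2$-descent on $\Jac(C)$ which converts $S_0$-integrality of $P$ with respect to $f$ into nonramification of the extension $k_0(R)/k_0$ outside $S_0$, uniformly as $P$ varies over $C(f,k,S)$.
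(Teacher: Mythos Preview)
Your approach is the same as the paper's: build $\tilde{C}'$ as the normalization of $C$ in $L(C)(\sqrt{g})$ for a function $g$ with $\dv(g)\equiv P\pm Q\pmod{2}$, and obtain the fixed field $L$ from weak Mordell--Weil. Your Kummer-cocycle plus Hermite--Minkowski argument \emph{is} the proof of weak Mordell--Weil, and the paper simply cites the conclusion $\Jac(C)(k)\subset 2\Jac(C)(L)$ directly. Good reduction of $\tilde{C}'$ is handled in the paper by the Szpiro--Ogus lemma (Lemma~\ref{OS}), which is your ``standard discriminant computation''; note that this lemma requires replacing $g$ by $\alpha g$ for a suitable $\alpha\in L^\times$, a step you omit but which is not entirely trivial.

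One correction, though: you have misdiagnosed where the $S$-integrality of $P$ enters. The unramifiedness of $k_0(R)/k_0$ outside $S_0$ holds for \emph{every} $P\in C(k_0)$, integral or not: once $\Jac(C)$ has good reduction at $v$ and $v\nmid 2$, multiplication by $2$ is \'etale and surjective on the special fibre of the N\'eron model, so the image of any $k_0$-point under the Kummer map is automatically unramified at $v$. This is just the standard weak Mordell--Weil argument, and your ``main obstacle'' is therefore not an obstacle at all. The only place where $f(P)\in\O_{k,S}$ is genuinely used is the step you treat as routine --- namely showing that $P\not\equiv Q\pmod v$ for $v\notin T$ (since $Q$ is a pole of $f$), which is precisely the hypothesis needed to feed into Lemma~\ref{OS} and conclude good reduction of $\tilde{C}'$.
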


We note that for any $P$ and $Q$ such maps $\pi$ exist (see Theorem \ref{bij}).  We will use the following result proven by Silverman in \cite[Prop.\ 3, Lemma 4]{Sil}, where it is attributed to Szpiro and Ogus.

\begin{lemma}
\label{OS}
Let $C$ be a nonsingular projective curve over a number field $L$ and let $g\in L(C)$.  Suppose that
\begin{equation*}
\dv(g)=\pm P_1 \pm P_2\pm \cdots \pm P_n +pD,
\end{equation*}
where $P_1,\ldots, P_n\in C(\Lbar)$, $D$ is some divisor on $C$, and $p$ is a prime.  Let $\mathscr{C}$ be a model for $C$ over $\O_L$.  Let $T$ be a finite set of places of $L$ such that:
\begin{enumerate}
\item  $T$ contains all archimedean places of $L$.
\label{l1}
\item  $T$ contains all places of bad reduction of $\mathscr{C}$.
\item  $T$ contains all places of $L$ lying above $p$.
\item  The ring of $T$-integers $\O_{L,T}$ has class number one.
\label{l4}
\end{enumerate}
There exists an element $\alpha\in L^*$ such that if $C'$ is a nonsingular projective curve with function field $L(C)(\sqrt[p]{\alpha g})$ and $v\not\in T$ is a place of $L$ such that $P_1,\ldots, P_n$ are distinct modulo $v$, then $C'$ has good reduction at $v$.
\end{lemma}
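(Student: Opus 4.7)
The plan is to choose $\alpha\in L^*$ so that, on a proper regular model $\mathscr{C}$ of $C$ over $\O_{L,T}$, the vertical part of $\dv(\alpha g)$ at every place outside $T$ becomes divisible by $p$. Once this is achieved, at a good place $v\notin T$ where the $P_i$ remain distinct, the Kummer cover $y^p=\alpha g$ is \'etale away from the disjoint sections $\overline{P_i}$ and tamely ramified along them, so a standard local analysis produces a smooth proper model of $C'$ over $\O_{L,v}$.

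Fix a proper regular model $\mathscr{C}$ of $C$ over $\O_{L,T}$ (which exists, and has $\mathscr{C}_v$ smooth and geometrically integral for every $v\notin T$ by the hypotheses on $T$). One can write
$$\dv_{\mathscr{C}}(g)=\sum_{i=1}^n\epsilon_i\,\overline{P_i}+p\,\overline{D}+\sum_v m_v\,\mathscr{C}_v,$$
where $\overline{P_i}$ and $\overline{D}$ are Zariski closures, $v$ ranges over the non-archimedean places of $L$, and only finitely many $m_v$ are nonzero. Invoking the class-number-one hypothesis on $\O_{L,T}$, for each $v\notin T$ with $m_v\neq 0$ choose $\pi_v\in L^*$ generating the corresponding prime ideal, and set
$$\alpha=\prod_{v\notin T,\,m_v\neq 0}\pi_v^{-m_v}\in L^*.$$
Then the vertical part of $\dv_{\mathscr{C}}(\alpha g)$ at every $v\notin T$ vanishes.

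Now fix $v\notin T$ with the $P_i$ distinct modulo $v$, and base change to $\O:=\O_{L,v}$: on $\mathscr{C}_\O$ one has $\dv(\alpha g)=\sum\epsilon_i\,\overline{P_i}+p\,\overline{D}$ with the $\overline{P_i}$ pairwise disjoint sections. At a closed point $x\in\mathscr{C}_v$ not on any $\overline{P_i}$, the divisor of $\alpha g$ is locally $p$ times a Cartier divisor, so $\alpha g$ is a $p$-th power times a unit in the strict henselization at $x$; since $v\nmid p$, the polynomial $y^p-\alpha g$ splits completely there, and the normalization $\mathscr{C}'$ of $\mathscr{C}_\O$ in $L(C)(\sqrt[p]{\alpha g})$ is \'etale over $x$. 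At a point $x\in\overline{P_i}\cap\mathscr{C}_v$, $\alpha g$ has local order $\pm 1$ along $\overline{P_i}$ and order divisible by $p$ transversally, giving a tamely ramified Kummer extension whose normalization is regular with smooth special fiber (by an Abhyankar-type local computation). Gluing, $\mathscr{C}'$ is a smooth proper model of $C'$ over $\O$, establishing good reduction at $v$. The main technical hurdle is this last step, which relies on all three ingredients encoded in the hypotheses: the distinctness of the $P_i$ mod $v$ (so the branch locus on the special fiber is reduced), the vanishing mod $p$ of the vertical multiplicity of $\alpha g$ (so the cover is unramified off the $\overline{P_i}$), and $v\nmid p$ (so the ramification is tame); the class-number-one assumption enters only to produce the $\pi_v$.
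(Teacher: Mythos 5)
The paper gives no proof of this lemma: it is quoted from Silverman \cite[Prop.\ 3, Lemma 4]{Sil}, where it is attributed to Szpiro and Ogus, and your argument is a correct reconstruction of the standard proof given there (use the class-number-one hypothesis to scale $g$ so that its vertical divisor on a model over $\O_{L,T}$ vanishes, then observe that the Kummer cover is \'etale off the $\overline{P_i}$ and tamely ramified along them). The only points worth tightening are that the $P_i$ lie in $C(\Lbar)$ rather than $C(L)$, so the $\overline{P_i}$ are horizontal prime divisors rather than sections and the local analysis should be carried out after a suitable base change, and the regularity and smoothness of the special fiber of the normalization at a ramified point should be verified explicitly via the local model $R[z]/(z^p-us)$ with $u$ a unit and $s$ a local equation for $\overline{P_i}$.
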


We now prove Theorem \ref{Par}.

\begin{proof}[Proof of Theorem \ref{Par}]
Without loss of generality, we may assume that $Q\in C(k)$ and that all of the $2$-torsion of $\Jac(C)$ is rational over $k$.  Let $L$ be a number field such that
\begin{equation}
\label{MW}
\Jac(C)(k)\subset 2\Jac(C)(L).
\end{equation}
The existence of such an $L$ follows from the (weak) Mordell-Weil theorem and we recall that such a field $L$ can be explicitly given.  We may assume without loss of generality that $S$ is a finite set of places of $k$ containing all places of bad reduction of $C$, all places of $k$ lying above the prime $2$, and such that $\O_{k,S}$ has trivial class group.  Let $s_1,\ldots, s_n$ be generators of $\O_{k,S}^*$.  Then $L=k(\sqrt{s_1},\ldots,\sqrt{s_n})$ will be a number field satisfying \eqref{MW} (\cite[\S C.1]{HS}).

Let $\mathscr{C}$ be a model of $C$ over $\O_L$.  Let $T$ be a finite set of places of $L$ satisfying \eqref{l1}--\eqref{l4} of Lemma \ref{OS} (with $p=2$) and such that

\begin{enumerate}
\setcounter{enumi}{4}
\item  $T$ contains every place of $L$ lying above a place of $S$.
\item  $T$ contains every finite place $v$ of $L$ such that $f$ is identically $0$ or $\infty$ modulo $v$.
\item  $T$ contains every finite place $v$ of $L$ such that a zero of $f$ reduces to a pole of $f$ modulo $v$.
\end{enumerate}
Let $P\in C(f,k,S)$.  Let $\pi:\tilde{C}\to C$ be a double cover (over $\kbar$) ramified exactly above $P$ and $Q$.  Then $\pi$ corresponds to an extension of function fields $\kbar(C)\subset \kbar(C)(\sqrt{g})$ for some rational function $g\in \kbar(C)$.  It is a standard fact that $\pi$ is ramified above a point $R\in C(\kbar)$ if and only if $g$ has a pole or zero of odd order at $R$.  Since $\pi$ is ramified exactly above $P$ and $Q$, we must have
\begin{equation*}
\dv(g)=P-Q+2D
\end{equation*}
for some divisor $D$.  Since $\Jac(C)(k)\subset 2\Jac(C)(L)$, the divisor class $[D]$ is $L$-rational and $D\sim E$ for some $L$-rational divisor $E$.  Then for an appropriate rational function $h\in \kbar(C)$ satisfying $\dv(h)=E-D$, after replacing $g$ by $gh^2$ we can assume that $g\in L(C)$.

Let $v\not\in T$ be a place of $L$.  Since $f(P)\in \O_{k,S}\subset \O_{L,T}$ by assumption, it follows from the definition of $T$ that $P$ cannot reduce to the pole $Q$ of $f$ modulo $v$.  Then by Lemma \ref{OS}, there exists $\alpha\in L^*$ such that if $\tilde{C}'$ is the nonsingular projective curve with function field $L(C)(\sqrt{\alpha g})$, then $\tilde{C}'$ has good reduction outside $T$.  Let $\pi':\tilde{C}'\to C$ be the morphism corresponding to the inclusion of function fields $L(C)\subset L(C)(\sqrt{\alpha g})$.  Then $\tilde{C}'$ and $\pi'$ satisfy the required properties.
\end{proof}

\section{Prym Varieties}

Let $\pi:\tilde{C}\to C$ be a morphism of nonsingular projective curves of degree two.  There are two natural maps between the Jacobians $\Jac(\tilde{C})$ and $\Jac(C)$ that one can associate to $\pi$.  First, we have the pullback map 
\begin{align*}
\pi^*:\Jac(C)&\to \Jac(\tilde{C}),\\
[D]&\mapsto [\pi^*D],
\end{align*}
where $[D]$ denotes the divisor class of a divisor $D$ of degree $0$.  Second, we have the so-called norm map
\begin{align*}
\Nm:\Jac(\tilde{C})&\to \Jac(C),\\
\left[\sum n_PP\right]&\mapsto \left[\sum n_P\pi(P)\right].
\end{align*}

\begin{definition}
The Prym variety associated to the double cover $\pi:\tilde{C}\to C$ is defined by
\begin{equation*}
\Prym(\tilde{C}/C)=(\ker \Nm)^0,
\end{equation*}
the connected component of $\ker \Nm$ containing the identity.
\end{definition}
We recall some basic facts about Prym varieties (see \cite{Mum}).  Let $\iota:\tilde{C}\to \tilde{C}$ be the involution of $\tilde{C}$ interchanging the two sheets of $\pi$.  This induces an involution $\iota:\Jac(\tilde{C})\to\Jac(\tilde{C})$.  Then we have the identities
\begin{equation*}
\Prym(\tilde{C}/C)=\ker(1+\iota)^0=\im (1-\iota).
\end{equation*}
Let $i:\Prym(\tilde{C}/C)\to \Jac(\tilde{C})$ be the inclusion.  Assume further now that $C$ has positive genus and that $\pi$ is either \'etale or ramified above exactly two points of $C$.  We have an isogeny 
\begin{align*}
\Jac(C)\times \Prym(\tilde{C}/C)&\to \Jac(\tilde{C}),\\
(P,Q)&\mapsto \pi^*(P)+i(Q).
\end{align*}
Furthermore, if $\theta_{\tilde{C}}$ is a theta divisor on $\Jac(\tilde{C})$, then $i^*\theta_{\tilde{C}}\equiv 2\Xi$ for some ample divisor $\Xi$ on $\Prym(\tilde{C}/C)$ and $\Xi$ yields a principal polarization of $\Prym(\tilde{C}/C)$.  So $\Prym(\tilde{C}/C)$ can naturally be given the structure of a principally polarized abelian variety.  We will find the following lemma useful.

\begin{lemma}
\label{Serre}
Suppose that $\tilde{C}$, $C$, and $\pi:\tilde{C}\to C$ are defined over a number field $k$.  Let $v$ be a finite place of $k$.  Then $\Jac(\tilde{C})$ has good reduction at $v$ if and only if both $\Prym(\tilde{C}/C)$ and $\Jac(C)$ have good reduction at $v$.
\end{lemma}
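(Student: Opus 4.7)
The plan is to leverage the isogeny
\begin{equation*}
\Jac(C)\times \Prym(\tilde{C}/C)\to \Jac(\tilde{C})
\end{equation*}
that was recorded just before the lemma statement (valid because $C$ has positive genus and $\pi$ is either \'etale or ramified over two points; in our application these hypotheses are exactly what Theorem \ref{Par} produces). Since this is an isogeny of abelian varieties defined over $k$, the question of good reduction at a finite place $v$ is the same for both source and target.

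First I would recall the N\'eron--Ogg--Shafarevich criterion: an abelian variety $A$ over $k$ has good reduction at a finite place $v$ of $k$ if and only if, for some (equivalently every) prime $\ell$ distinct from the residue characteristic of $v$, the inertia subgroup $I_v\subset \mathrm{Gal}(\kbar/k)$ acts trivially on the Tate module $T_\ell(A)$. The key consequence, which I would cite from Serre--Tate, is that isogenous abelian varieties have the same set of primes of good reduction: an isogeny $A\to B$ induces a Galois-equivariant isomorphism $T_\ell(A)\otimes_{\mathbb{Z}_\ell}\mathbb{Q}_\ell\cong T_\ell(B)\otimes_{\mathbb{Z}_\ell}\mathbb{Q}_\ell$, so triviality of the inertia action is preserved in both directions.

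Next I would observe that good reduction is compatible with products: $\Jac(C)\times \Prym(\tilde{C}/C)$ has good reduction at $v$ if and only if each factor does. This is immediate from the N\'eron--Ogg--Shafarevich criterion applied to $T_\ell(A\times B)=T_\ell(A)\oplus T_\ell(B)$ (or, equivalently, from the fact that the N\'eron model of a product of abelian varieties is the product of their N\'eron models, so one is an abelian scheme over the localization $\O_{k,v}$ exactly when both factors are).

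Combining these two facts finishes the argument: $\Jac(\tilde{C})$ has good reduction at $v$ iff $\Jac(C)\times \Prym(\tilde{C}/C)$ does (by the isogeny step), iff both $\Jac(C)$ and $\Prym(\tilde{C}/C)$ do (by the product step). I do not expect any serious obstacle here; the only mild subtlety is making sure the hypothesis that $\pi$ is \'etale or ramified above exactly two points (needed for the isogeny to exist as stated) is in force, but this will always be the case when the lemma is applied in this paper via Theorem \ref{Par}.
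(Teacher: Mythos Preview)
Your proposal is correct and matches the paper's own proof essentially verbatim: the paper simply cites \cite[Cor.~2]{Ser} (Serre--Tate, i.e., the consequence of N\'eron--Ogg--Shafarevich that $k$-isogenous abelian varieties share the same primes of good reduction) applied to the $k$-isogeny $\Jac(C)\times \Prym(\tilde{C}/C)\to \Jac(\tilde{C})$. Your additional remarks about Tate modules and products just unpack what that citation contains, and your caveat about the standing hypothesis on $\pi$ is accurate---the lemma is stated under the running assumption made in the paragraph preceding it.
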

\begin{proof}
The result follows \cite[Cor.\ 2]{Ser} from the fact that the abelian varieties $\Jac(\tilde{C})$ and $\Jac(C)\times \Prym(\tilde{C}/C)$ are $k$-isogenous.
\end{proof}

In general, the Prym variety $\Prym(\tilde{C}/C)$ is not the Jacobian of a curve.  However, in the important special case where $C$ is hyperelliptic this holds (and the Prym variety is a hyperelliptic Jacobian).

\begin{theorem}[Dalaljan \cite{Dal1,Dal2}, Mumford \cite{Mum}]
\label{Dal}
Suppose that $C$ is hyperelliptic and $\pi:\tilde{C}\to C$ is a double cover of $C$ that is either unramified or ramified above exactly two points of $C$.  Then $(\Prym(\tilde{C}/C),\Xi)$ is isomorphic to the Jacobian $(\Jac(C'),\theta_{C'})$ of some hyperelliptic curve $C'$.
\end{theorem}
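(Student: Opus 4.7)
The plan is to use the hyperelliptic involution $\sigma\colon C\to C$ to lift extra symmetry to $\tilde{C}$, so that the Prym can be decomposed into hyperelliptic Jacobians via a group-theoretic argument.

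First I would show that $\sigma$ lifts to an involution $\tilde{\sigma}\colon\tilde{C}\to\tilde{C}$ commuting with the sheet-interchange involution $\iota$ of $\pi$. In the \'etale case, $\pi$ is classified by a class $\eta\in\Jac(C)[2]$ and $\sigma$ acts as $-1$ on $\Jac(C)$, so $\sigma^*\eta=-\eta=\eta$ and the pulled-back cover is isomorphic to $\pi$; this isomorphism produces the desired lift. The ramified case is handled analogously once one notes that the two-point branch divisor must be $\sigma$-stable (the scenario in which the theorem is actually applied in this paper).

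Next, set $G=\langle\iota,\tilde{\sigma}\rangle\cong(\mathbb{Z}/2)^2$, so that $\tilde{C}/G=C/\sigma\cong\mathbb{P}^1$. In addition to $\tilde{C}/\iota=C$, define $C_1=\tilde{C}/\tilde{\sigma}$ and $C_2=\tilde{C}/(\iota\tilde{\sigma})$. Each $C_i$ is a double cover of $\mathbb{P}^1$, hence hyperelliptic or rational. A Riemann--Hurwitz computation distributing the ramification of $\pi$ and of $q\colon C\to\mathbb{P}^1$ among these three intermediate double covers yields $g(C_1)+g(C_2)=g(\tilde{C})-g(C)=\dim\Prym(\tilde{C}/C)$. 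Since $\Jac(\tilde{C}/G)=0$, the standard decomposition of $\Jac(\tilde{C})$ under the Klein four-group action supplies an isogeny
\begin{equation*}
\Jac(C)\times\Jac(C_1)\times\Jac(C_2)\longrightarrow \Jac(\tilde{C})
\end{equation*}
built from the three pullback maps. Comparing with the Prym isogeny $\Jac(C)\times\Prym(\tilde{C}/C)\to\Jac(\tilde{C})$ recalled above gives $\Prym(\tilde{C}/C)\sim\Jac(C_1)\times\Jac(C_2)$, so $C'=C_1\sqcup C_2$ (or the non-rational component when one $C_i$ has genus zero) is the sought hyperelliptic curve.

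The main obstacle is upgrading this isogeny to an isomorphism of principally polarized abelian varieties $(\Prym(\tilde{C}/C),\Xi)\cong(\Jac(C'),\theta_{C'})$. I would attack this by pulling back $\theta_{\tilde{C}}$ in two ways: along the Prym inclusion $i\colon\Prym(\tilde{C}/C)\hookrightarrow\Jac(\tilde{C})$ one has $i^*\theta_{\tilde{C}}\equiv 2\Xi$ by the Prym theory recalled before the statement, while along the product embedding of $\Jac(C_1)\times\Jac(C_2)$ into $\Jac(\tilde{C})$ it pulls back to twice the natural product Riemann theta divisor. Matching these two computations and then checking that the kernel of the resulting morphism is trivial is the delicate heart of the argument, essentially Mumford's theta-divisor calculation from \cite{Mum}.
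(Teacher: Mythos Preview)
Your argument has a genuine gap in the ramified case: the assertion that the two-point branch divisor $\{P,Q\}$ is $\sigma$-stable is false, both in the statement of the theorem (which imposes no such hypothesis) and in the way the paper actually uses it. In the paper's application (Theorem~\ref{Par} and Theorem~\ref{bij}) the branch points are an arbitrary $S$-integral point $P=(x_P,y_P)$ and a fixed pole $Q=(x_Q,y_Q)$ of $f$, with $x_P\neq x_Q$; the hyperelliptic involution sends these to $(x_P,-y_P)$ and $(x_Q,-y_Q)$, so $\{P,Q\}$ is almost never $\sigma$-invariant. Without that invariance there is no lift $\tilde{\sigma}$ of $\sigma$ to $\tilde{C}$, so no Klein four-group acts on $\tilde{C}$ and your intermediate quotients $C_1$, $C_2$ do not exist. (Your \'etale argument is fine and is essentially Mumford's.)

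What Dalaljan does---and what the paper records in Theorem~\ref{tDal} and works out explicitly in Theorem~\ref{bij}---is precisely a repair of this obstruction. Rather than lifting $\sigma$ to $\tilde{C}$, one introduces a second map $\pi'=\sigma\circ\pi\colon\tilde{C}'=\tilde{C}\to C$ (branched now over $\sigma(P),\sigma(Q)$) and passes to the fiber product $\tilde{\tilde{C}}=\tilde{C}\times_{C}\tilde{C}'$. This larger curve \emph{does} carry commuting involutions, and among its various quotients one finds a hyperelliptic curve $\tilde{C}_1$ of the correct genus with $(\Prym(\tilde{C}/C),\Xi)\cong(\Jac(\tilde{C}_1),\theta_{\tilde{C}_1})$. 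So the group-theoretic idea you are reaching for is right, but it must be implemented one level higher in the tower. A secondary issue: even in the cases where your decomposition goes through, writing $C'=C_1\sqcup C_2$ does not give a hyperelliptic curve; you would still need to argue that one of the $C_i$ is rational, which you have not done.
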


Suppose that $C$ is a hyperelliptic curve of genus $g$.  For $Q\in C(\kbar)$ and $P\in C(f,k,S)$ as in Theorem \ref{Par}, we have a double cover $\tilde{C}_P\to C$ ramified exactly above $P$ and $Q$ and having nice reduction properties.  By Theorem \ref{Dal}, $\Prym(\tilde{C}_P/C)\cong \Jac(X_P)$ for some hyperelliptic curve $X_P$ of genus $g$.  In the next section we will explicitly compute a Weierstrass equation for $X_P$.

\section{Some explicit equations}

We begin by recalling some results from \cite[\S 3]{Dal2}.

\begin{theorem}[Dalaljan]
\label{tDal}
Let $k$ be an algebraically closed field, $\chr k\neq 2$.  Let $C$ be a nonsingular projective hyperelliptic curve over $k$ with hyperelliptic map $p:C\to \mathbb{P}^1$.  Let $\pi:\tilde{C}\to C$ be a double cover of nonsingular projective curves, ramified above exactly two points of $C$.  Associated to $\pi$ and $p$ there exists a tower of curves
\begin{equation}
\label{tower}
\begindc{\commdiag}[57]
\obj(2,0)[20]{$\mathbb{P}^1$}
\obj(1,1)[11]{$C$}
\obj(2,1)[21]{$C_0$}
\obj(3,1)[31]{$C_1$}
\obj(0,2)[02]{$\tilde{C}'$}
\obj(1,2)[12]{$\tilde{C}$}
\obj(2,2)[22]{$\tilde{C}_0$}
\obj(3,2)[32]{$\tilde{C}_1$}
\obj(4,2)[42]{$\tilde{C}_1'$}
\obj(2,3)[23]{$\tilde{\tilde{C}}$}

\mor{11}{20}{$p$}[\atright, \solidarrow]
\mor{21}{20}{$p_0$}
\mor{31}{20}{$p_1$}

\mor{02}{11}{$\pi'$}[\atright, \solidarrow]
\mor{12}{11}{$\pi$}[\atright, \solidarrow]
\mor{22}{11}{$\pi_0$}[\atright, \solidarrow]
\mor{22}{21}{$\pi_0''$}
\mor{22}{31}{$\pi_0'$}
\mor{32}{31}{$\pi_1$}
\mor{42}{31}{$\pi_1'$}

\mor{23}{02}{$\tilde{\pi}'$}[\atright, \solidarrow]
\mor{23}{12}{$\tilde{\pi}$}[\atright, \solidarrow]
\mor{23}{22}{$\tilde{\pi}_0$}
\mor{23}{32}{$\tilde{\pi}_1$}
\mor{23}{42}{$\tilde{\pi}_1'$}
\enddc
\end{equation}
with $C_1\cong \mathbb{P}^1$, $\tilde{C}_1$ hyperelliptic, $\deg \pi_1=2$, and
\begin{equation*}
(\Prym(\tilde{C}/C),\Xi)\cong (\Jac(\tilde{C}_1),\theta_{\tilde{C}_1}).
\end{equation*}
For any positive integer $g$, the tower \eqref{tower} induces a bijection between equivalence classes of towers
\begin{multline*}
\{[\tilde{C}\stackrel{\pi}{\rightarrow}C\stackrel{p}{\rightarrow}\mathbb{P}^1]\mid \deg \pi=\deg p=2, g(\tilde{C})=2g, g(C)=g \}\longleftrightarrow\\
 \{[\tilde{C}_1\stackrel{\pi_1}{\rightarrow}C_1\stackrel{p_1}{\rightarrow}\mathbb{P}^1]\mid \deg \pi_1=\deg p_1=2, g(\tilde{C}_1)=g, C_1\cong \mathbb{P}^1\}.
\end{multline*}
\end{theorem}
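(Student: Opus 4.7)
My plan is to build the tower from the Galois closure of $k(\tilde{C})/k(\mathbb{P}^1)$ and then decompose Jacobians under the induced $D_4$-action.  Write $\sigma$ for the hyperelliptic involution of $C$ and $\iota$ for the deck transformation of $\pi$.  The extension $k(\tilde{C})/k(\mathbb{P}^1)$ has degree $4$ with intermediate field $k(C)$; both individual steps are Galois but the whole extension typically is not, so let $\tilde{\tilde{C}}$ be its Galois closure, concretely the normalization of the fiber product $\tilde{C}\times_C\sigma^{*}\tilde{C}$.  Writing $k(\tilde{C})=k(C)(\sqrt{g})$ and extending $\sigma$ to $k(\tilde{\tilde{C}})$ by $\sqrt{g}\mapsto\sqrt{\sigma g}$, a direct computation shows that a lift of $\sigma$ together with $\iota$ generate a subgroup of $\mathrm{Gal}(k(\tilde{\tilde{C}})/k(\mathbb{P}^1))$ isomorphic to the dihedral group $D_4$; in the degenerate case where $\pi$ is already $\sigma$-equivariant on $C$ the group collapses to $\mathbb{Z}/2\times\mathbb{Z}/2$ and must be treated separately.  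The subgroup lattice of $D_4$ has exactly three index-$2$ subgroups (all normal) and five subgroups of order $2$, and these correspond respectively to the three curves $C,C_0,C_1$ and the five curves $\tilde{C},\tilde{C}',\tilde{C}_0,\tilde{C}_1,\tilde{C}_1'$ of the diagram, with every covering map forced by the corresponding containment of subgroups.

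Next I would verify $C_1\cong\mathbb{P}^1$ and $g(\tilde{C}_1)=g$ by Riemann-Hurwitz bookkeeping inside the $D_4$-diagram.  The map $p_1:C_1\to\mathbb{P}^1$ is ramified at exactly the two points $p(P),p(Q)$, so $g(C_1)=0$; the ramification of $\pi_1:\tilde{C}_1\to C_1$ comes from the Weierstrass points of $p$, and chasing ramification indices yields $g(\tilde{C}_1)=g$, in agreement with $\dim\Prym(\tilde{C}/C)=g(\tilde{C})-g(C)=g$.  To produce the isogeny $\Prym(\tilde{C}/C)\sim\Jac(\tilde{C}_1)$, I would decompose $\Jac(\tilde{\tilde{C}})$ up to isogeny into isotypic components for the five irreducible characters of $D_4$ (four one-dimensional and one two-dimensional).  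Using the description $\Prym(\tilde{C}/C)=\im(1-\iota)\subset\Jac(\tilde{C})$, a direct computation of the fixed subspaces of each irrep shows that the Prym is precisely the two-dimensional-irrep isotypic component of $\Jac(\tilde{\tilde{C}})$; the same irrep is also the only nonzero contribution to $\Jac(\tilde{C}_1)$, and $\pi_1^{*}:\Jac(\tilde{C}_1)\to\Jac(\tilde{\tilde{C}})$ realizes the matching isogeny.

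The main technical obstacle is upgrading this isogeny to an isomorphism of principally polarized abelian varieties, i.e., verifying $\Xi\equiv\theta_{\tilde{C}_1}$ rather than merely obtaining a polarization of the same type.  This requires tracking the theta divisor $\theta_{\tilde{C}}$ through the correspondence: one compares $i^{*}\theta_{\tilde{C}}$ with the pullback of $\theta_{\tilde{C}_1}$ under the resulting isomorphism, exploiting that $\pi$ is ramified at exactly two points (so $i^{*}\theta_{\tilde{C}}$ really is twice an irreducible divisor, as stated before the theorem) together with the compatibility of theta divisors with pullback and norm under the double covers in the diagram.  Finally, for the bijection I would invert the construction: given a tower $[\tilde{C}_1\to C_1\stackrel{p_1}{\to}\mathbb{P}^1]$ with $C_1\cong\mathbb{P}^1$, the nontrivial deck transformation of $p_1$ produces a companion double cover of $C_1$, and normalizing the fiber product over $\mathbb{P}^1$ reconstructs a $D_4$-cover $\tilde{\tilde{C}}$, from which $C$ and $\tilde{C}$ are read off as the quotients by the designated index-$4$ and index-$2$ subgroups.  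That the two constructions are mutually inverse is automatic once the $D_4$ subgroup lattice is in place.
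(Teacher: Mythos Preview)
The paper does not prove this statement; it is quoted from Dalaljan \cite{Dal1,Dal2} (and Mumford \cite{Mum}) and used as a black box.  What the paper \emph{does} do, inside the proof of Theorem~\ref{bij}, is spell out the construction of the tower concretely: it sets $\tilde{C}'=\tilde{C}$ with $\pi'=i_p\circ\pi$, forms $\tilde{\tilde{C}}=\tilde{C}\times_C\tilde{C}'$, and obtains $\tilde{C}_0,\tilde{C}_1,C_1$ as quotients by the explicit commuting involutions $i_{\tilde\pi_0},i_{\tilde\pi_1}$ and their product.  Your Galois-closure/$D_4$ description is exactly this construction rephrased group-theoretically: the fiber product is the Galois closure, the two involutions together with the sheet-exchange generate $D_4$, and the curves in the diagram are the quotients by the subgroups in the $D_4$ lattice.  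Your ramification claims are also correct and match the paper's explicit coordinates (in the paper's parametrization $p_1$ is $t\mapsto t^2$, branched over $x_P,x_Q$, and the branch locus of $\pi_1$ sits over $\infty,\alpha_1,\dots,\alpha_{2g+1}$).

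Where your sketch is genuinely incomplete is precisely the step you flag: passing from an isogeny $\Prym(\tilde{C}/C)\sim\Jac(\tilde{C}_1)$ to an isomorphism of \emph{principally polarized} abelian varieties.  The $D_4$-isotypic decomposition (Kani--Rosen style) only gives isogeny information and says nothing about the polarization; knowing that $i^*\theta_{\tilde C}\equiv 2\Xi$ with $\Xi$ principal is exactly the content one needs from Mumford/Dalaljan, and your proposed comparison of $i^*\theta_{\tilde C}$ with the pullback of $\theta_{\tilde C_1}$ is the right idea but is not an argument as written---one has to actually identify the correspondence on divisors, which is the substance of Dalaljan's paper.  Since the paper itself defers this to the references, your write-up is at the same level of detail as the paper on this point; just be aware that the representation-theoretic shortcut does not by itself close the gap.
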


Here, we say that two towers of curves $X\stackrel{\pi}{\rightarrow} Y\stackrel{p}{\rightarrow} Z$ and $X'\stackrel{\pi'}{\rightarrow} Y'\stackrel{p'}{\rightarrow} Z'$ are equivalent if there exists a commutative diagram
\begin{equation*}
\begindc{\commdiag}[40]
\obj(0,2)[02]{$X$}
\obj(0,1)[01]{$Y$}
\obj(0,0)[00]{$Z$}
\obj(1,2)[12]{$X'$}
\obj(1,1)[11]{$Y'$}
\obj(1,0)[10]{$Z'$}

\mor{02}{01}{$\pi$}
\mor{01}{00}{$p$}
\mor{12}{11}{$\pi'$}
\mor{11}{10}{$p'$}
\mor{02}{12}{}
\mor{01}{11}{}
\mor{00}{10}{}
\enddc
\end{equation*}
where the horizontal morphisms are isomorphisms.

\begin{theorem}
\label{bij}
Let $k$ be an algebraically closed field, $\chr k\neq 2$.  Let $C$ be a (nonsingular projective) hyperelliptic curve over $k$ of genus $g$ given by an equation
\begin{equation*}
y^2=\prod_{i=1}^{2g+1}(x-\alpha_i), \quad \alpha_i\in k,
\end{equation*}
and $p:C\to \mathbb{P}^1$ the hyperelliptic map $(x,y)\mapsto x$.  Let $P=(x_P,y_P), Q=(x_Q,y_Q)\in C(k)$ with $x_P\neq x_Q$ and $y_P\neq 0$.  The tower \eqref{tower} induces a bijection between the sets
\begin{equation*}
\{\text{double covers } \pi:\tilde{C}\to C, \text{ up to equivalence, ramified exactly above $P$ and $Q$}\}
\end{equation*}
and
\begin{equation*}
\left\{\text{hyperelliptic curves }\tilde{C}_1: y^2=(x-1)\prod_{i=1}^{2g+1}\left(x-\beta_i\right)\mid \beta_i^2=\frac{x_Q-\alpha_i}{x_P-\alpha_i}, \prod_{i=1}^{2g+1} \beta_i=\frac{y_Q}{y_P}\right\}.
\end{equation*}

Both sets have cardinality $2^{2g}$.
\end{theorem}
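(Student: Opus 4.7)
The plan is to apply Theorem \ref{tDal} to obtain an abstract bijection and then make it explicit in the given coordinates via the function field of the Galois closure of $\tilde{C}\to \mathbb{P}^1$.

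First, I encode each double cover $\pi\colon\tilde{C}\to C$ ramified exactly above $P,Q$ by a square class $g\in k(C)^*/k(C)^{*2}$ with $\dv(g)=P+Q+2D$. Such $g$ exists because $[P+Q]\in 2\Pic(C)$ (multiplication by $2$ is surjective on $\Jac(C)$ when $\chr k\neq 2$), and the set of such classes is a torsor under $\Jac(C)[2]\cong(\mathbb{Z}/2)^{2g}$, accounting for the cardinality $2^{2g}$ on the left-hand side. Next I identify $C_1$ concretely: since $x_P\neq x_Q$ and $y_P\neq 0$, the set $\{P,Q\}$ is not preserved by the hyperelliptic involution $\iota$, so the Galois closure $\tilde{\tilde{C}}$ of $\tilde{C}\to\mathbb{P}^1$ has group $D_4$ with function field $k(C)(\sqrt{g},\sqrt{\iota^*g})$. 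The norm $N(g)=g\cdot\iota^*g$ is $\iota$-invariant, hence lies in $k(x)$; a divisor computation using $P+\iota(P)=p^*[x_P]$ and $Q+\iota(Q)=p^*[x_Q]$ shows $N(g)=c(x-x_P)(x-x_Q)h(x)^2$ for some $c\in k^*$ and $h\in k(x)$. Thus $C_1\cong\mathbb{P}^1$, and I parameterize it by $w=\sqrt{(x-x_Q)/(x-x_P)}$, with $w=\infty, 0, \pm 1,\pm\beta_i$ corresponding to $x=x_P, x_Q, \infty, \alpha_i$ and $\beta_i^2=(x_Q-\alpha_i)/(x_P-\alpha_i)$.

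Third, I compute the ramification of $\tilde{C}_1\to C_1$ by analyzing the inertia subgroups of the $D_4$-cover $\tilde{\tilde{C}}\to\mathbb{P}^1$. Above $x_P$ and $x_Q$ the inertia meets the order-$2$ subgroup fixing $\tilde{C}_1$ trivially, so $\tilde{C}_1\to C_1$ is unramified above $w=\infty, 0$; above each Weierstrass value $\alpha_i$ and above $\infty$, the inertia is an order-$2$ subgroup conjugate to the one fixing $\tilde{C}_1$, producing ramification at exactly one of the two preimages in $C_1$. After fixing sign conventions on $w$ and on the $\beta_i$, these $2g+2$ branch points may be written as $w=1,\beta_1,\ldots,\beta_{2g+1}$, yielding the equation $\tilde{C}_1\colon y^2=(w-1)\prod_{i=1}^{2g+1}(w-\beta_i)$ after absorbing the leading coefficient into $y$. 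To pin down the relation $\prod\beta_i=y_Q/y_P$, I track the generator $\xi=\sqrt{g}+\sqrt{\iota^*g}$ of $k(\tilde{C}_1)$ over $k(C_1)$: evaluating $\xi^2=g+\iota^*g+2\sqrt{N(g)}$ at $w=0$ (a preimage of $Q$), using that $g$ has a simple zero at $Q$ while $\iota^*g$ is a unit there, and matching with $(-1)^{2g+2}\prod\beta_i=\prod\beta_i$ read off from the equation of $\tilde{C}_1$ at $w=0$, extracts the correct sign.

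The cardinality of the right-hand side is also $2^{2g}$: the $2g+1$ independent sign choices give $2^{2g+1}$ tuples, and since $(\prod\beta_i)^2=f(x_Q)/f(x_P)=y_Q^2/y_P^2$, the constraint $\prod\beta_i=y_Q/y_P$ selects one of the two possible values $\pm y_Q/y_P$, halving the count. The main obstacle I expect is in the third paragraph: pinning down the exact sign $+y_Q/y_P$ (rather than $-y_Q/y_P$) requires a careful and normalization-sensitive tracking of the square roots $\sqrt{g}$, $\sqrt{\iota^*g}$, and the $\beta_i$, as well as fixing the canonical choice of $g$ associated to each cover $\pi$ so that this sign is compatible across the bijection. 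Once this sign calibration is carried out, the explicit formulas together with Theorem \ref{tDal} yield the stated bijection.
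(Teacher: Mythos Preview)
Your Galois-theoretic route through the $D_4$ closure and inertia analysis is a legitimate alternative to the paper's argument, and the overall architecture (count both sides as $2^{2g}$, invoke the tower bijection of Theorem~\ref{tDal}, then make the correspondence explicit) matches.  The substantive difference is in how the explicit equation for $\tilde{C}_1$ and the sign constraint are extracted.  The paper does not work with an abstract square class $g$: it uses Riemann--Roch on $L((2g+2)\infty)$, with basis $1,x,\ldots,x^{g+1},y$, to normalize the generating function to the concrete form $\phi=y+h(x)$ with $\deg h\le g+1$.  The conditions $\phi(P)=\phi(Q)=0$ then force $h(x_P)=-y_P$ and $h(x_Q)=-y_Q$, and the entire tower becomes a chain of explicit affine equations; after the substitution $t^2=(x-x_Q)/(x-x_P)$, the Weierstrass polynomial of $\tilde{C}_1$ is an explicit degree-$(2g+2)$ polynomial $G(t)$ whose constant term $G(0)=-y_Q$ and leading coefficient $-(-1)^{g+1}y_P$ can be read off directly, yielding $\prod\beta_i=y_Q/y_P$ (with a short parity correction when $g$ is even).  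Your proposed sign computation via $\xi^2(w{=}0)=(\iota^*g)(Q)$ cannot by itself succeed, because $(\iota^*g)(Q)$ is determined only up to squares by the square class of $g$; you correctly flag this as the obstacle, and the Riemann--Roch normalization $\phi=y+h(x)$ is exactly the ``canonical choice of $g$'' that resolves it.  The same normalization also pins down \emph{which} of $\pm\beta_i$ is a branch point (not just that one of them is), which your inertia sketch leaves to an unspecified sign convention.
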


\begin{proof}

We first give an explicit description of the set of double covers $\pi:\tilde{C}\to C$ ramified exactly above $P$ and $Q$.  At the function field level, we have $k(\tilde{C})=k(C)(\sqrt{\phi})$ for some rational function $\phi\in k(C)$.  The corresponding morphism $\pi$ is ramified exactly above $P$ and $Q$ if and only if $\dv(\phi)=P+Q+2D$ for some divisor $D$.  It's easily seen then that there is a bijection
\begin{multline*}
\{\text{double covers } \pi:\tilde{C}\to C, \text{ up to equivalence, ramified exactly above $P$ and $Q$}\} \\
\longleftrightarrow\{[D]\mid D\in \Div(C), 2D\sim -(P+Q)\},
\end{multline*}
where $\Div(C)$ denotes the group of divisors of $C$ and $[D]$ the linear equivalence class of $D$.  As is well-known, the latter (and hence former) set has cardinality $2^{2g}$.  

Let $\infty$ denote the (unique) point of $C$ at infinity relative to the equation $y^2=\prod_{i=1}^{2g+1}(x-\alpha_i)$.  Then we have an embedding $C\to \Jac(C)$, $\pt\mapsto [\pt-\infty]$.  Every element of $\Jac(C)$ can be represented by a divisor of the form $P_1+\cdots+P_g-g\infty$, $P_1,\ldots,P_g\in C(k)$.  Then in each linear equivalence class of divisors $D$ satisfying $2D\sim -(P+Q)$, we can find a divisor $D$ of the form
\begin{equation*}
D=P_1+\cdots+P_g-(g+1)\infty, \quad P_1,\ldots,P_g\in C(k).
\end{equation*}
Let $\pi:\tilde{C}\to C$ be a double cover ramified exactly above $P$ and $Q$.  Then we find that $k(\tilde{C})=k(C)(\sqrt{\phi})$ for some rational function $\phi\in k(C)$ satisfying
\begin{equation*}
\dv(\phi)=P+Q+2P_1+\cdots+2P_g-(2g+2)\infty.
\end{equation*}
As usual, let $L(D)$ be the $k$-vector space $L(D)=\{\psi \in k(X)\mid\dv(\psi)\geq -D\}$.  By Riemann-Roch, $\dim L((2g+2)\infty)=g+3$ and a basis for $L((2g+2)\infty)$ is given by the rational functions $1,x,\ldots,x^{g+1},y$.  It follows that we can write
\begin{equation*}
\phi=ay+h(x)
\end{equation*}
for some $a\in k$ and some $h\in k[x]$ with $\deg h\leq g+1$.  Since $x_P\neq x_Q$, we have $a\neq 0$, and so we can assume $a=1$.  Let $f(x)=\prod_{i=1}^{2g+1}(x-\alpha_i)$.  Then $\tilde{C}$ has an equation of the form
\begin{align*}
y^2&=f(x),\\
z^2&=y+h(x),
\end{align*}
where $h\in k[x]$ and $\deg h\leq g+1$ (where throughout, we mean that the curve, here $\tilde{C}$, is a projective normalization of the curve defined by the given equations).

We now describe the relevant part of the tower \eqref{tower} (see \cite{Dal2}).  For a double cover $\phi:X'\to X$ of nonsingular projective curves, we let $i_\phi$ denote the corresponding involution of $X'$.  We set $\tilde{C}'=\tilde{C}$ and $\pi'=i_p\circ \pi$, where $p:C\to \mathbb{P}^1$ is the projection onto the $x$-coordinate.  Let $\tilde{\tilde{C}}=\tilde{C}\times_{C}\times \tilde{C}'$ and let $\tilde{\pi}$ and $\tilde{\pi}'$ be the natural projection maps onto $\tilde{C}$ and $\tilde{C}'$, respectively.  Explicitly, $\tilde{\tilde{C}}$ consists of pairs $(\tilde{P},\tilde{P}')\in \tilde{C}\times \tilde{C}'$ such that $\pi(\tilde{P})=\pi'(\tilde{P}')$.  Then $\tilde{\tilde{C}}$ can be given by the equations
\begin{align*}
y^2&=f(x),\\
z^2&=y+h(x),\\
z'^2&=-y+h(x).
\end{align*}

Let $i_{\tilde{\pi}_0}=i_{\tilde{\pi}'}\circ i_{\tilde{\pi}}$ and let $i_{\tilde{\pi}_1}$ be the involution of $\tilde{\tilde{C}}=\tilde{C}\times_{C}\times \tilde{C}'$ that switches the coordinates.  In the coordinates $(x,y,z,z')$ these are the involutions $(x,y,z,z')\mapsto (x,y,-z,-z')$ and $(x,y,z,z')\mapsto (x,-y,z',z)$, respectively.  Taking the quotients of $\tilde{\tilde{C}}$ by $i_{\tilde{\pi}_0}$ and $i_{\tilde{\pi}_1}$, we obtain curves $\tilde{C}_0$ and $\tilde{C}_1$, respectively, and double covers $\tilde{\pi}_0:\tilde{\tilde{C}}\to \tilde{C}_0$ and $\tilde{\pi}_1:\tilde{\tilde{C}}\to \tilde{C}_1$, respectively.  Note that
\begin{align*}
(z+z')^2&=2h(x)+2zz',\\
(zz')^2&=h(x)^2-f(x).
\end{align*}

Using the above equations, we see that $\tilde{C}_0$ is given by the equations
\begin{align*}
y^2&=f(x),\\
z^2&=h(x)^2-f(x),
\end{align*}
and $\tilde{C}_1$ by the equations
\begin{align}
w^2&=2h(x)+2z,\label{C11}\\
z^2&=h(x)^2-f(x)\label{C12}.
\end{align}

The involutions $i_{\tilde{\pi}_0}$ and $i_{\tilde{\pi}_1}$ commute and generate a group of order $4$.  The quotient of $\tilde{\tilde{C}}$ modulo the action of this group yields the curve $C_1$ in the tower.  The curve $C_1$ is naturally given as the projective normalization of the curve defined by the equation
\begin{equation*}
z^2=h(x)^2-f(x).
\end{equation*}

The induced maps $\pi_0'$ and $\pi_1$ are then induced by the natural projection maps onto the $x$ and $z$ coordinates.  From the definition of $h(x)$, we have
\begin{equation}
\label{C12p}
h(x)^2-f(x)=(x-x_P)(x-x_Q)F(x)^2,
\end{equation}
for some polynomial $F\in k[x]$.  It follows that $C_1\cong \mathbb{P}^1$.  As $\deg \pi_0'=\deg \pi_1=2$, we see that $\tilde{C}_0$ and $\tilde{C}_1$ are hyperelliptic curves.  

A hyperelliptic Weierstrass equation for $\tilde{C}_1$ can be computed as follows.  In view of \eqref{C12p}, we can parametrize \eqref{C12} by setting 
\begin{align}
t^2&=\frac{x-x_Q}{x-x_P},\label{t2}\\
x&=x(t)=\frac{x_Q-x_Pt^2}{1-t^2},\notag\\
z&=z(t)=t(x(t)-x_P)F(x(t)).\notag
\end{align}
Substituting into \eqref{C11}, we see that we need to consider the polynomial
\begin{align*}
G(t)&=(1-t^2)^{g+1}(h(x(t))+z(t))\\
&=(1-t^2)^{g+1}\left(h\left(\frac{x_Q-x_Pt^2}{1-t^2}\right)+t\left(\frac{x_Q-x_P}{1-t^2}\right)F\left(\frac{x_Q-x_Pt^2}{1-t^2}\right)\right).
\end{align*}

Let $\alpha_i, i=1,\ldots, 2g+1$, be the roots of $f$.  Then when $x=\alpha_i$, \eqref{C12} gives $z=\pm h(\alpha_i)$.  From \eqref{t2}, it then follows that for some (unique) choice of the square root, $t=\sqrt{\frac{x_Q-\alpha_i}{x_P-\alpha_i}}=\beta_i$ is a root of $G(t)$, $i=1,\ldots, 2g+1$.   From \eqref{C12p}, it follows that the leading coefficients of $h(x)$ and $F(x)$ are equal up to sign.  This easily implies that either $t=1$ or $t=-1$ is a root of $G(t)$.  Replacing $F(x)$ by $-F(x)$, if necessary, we can assume that $t=1$ is a root of $G(t)$.  Since $\deg G\leq 2g+2$, we find that the roots of $G$ are exactly given by the $2g+2$ distinct elements $t=1,\beta_1,\ldots, \beta_{2g+1}$.  

We have $G(0)=h(x_Q)=-y_Q$, from the definition of $h$.  The leading coefficient of $G$ can be computed as
\begin{equation*}
\lim_{t\to\infty}(-1)^{g+1}\frac{G(t)}{(1-t^2)^{g+1}}=(-1)^{g+1}h(x_P)=-(-1)^{g+1}y_P.
\end{equation*}
Thus,
\begin{equation*}
1\cdot \prod_{i=1}^{2g+1} \beta_i=\frac{-y_Q}{-(-1)^{g+1}y_P}=(-1)^{g+1}\frac{y_Q}{y_P}.
\end{equation*}
If $g$ is odd, so that $(1-t^2)^{g+1}$ is an even power of $1-t^2$, then it follows from the above and \eqref{C11} and \eqref{C12} that $\tilde{C}_1$ has a Weierstrass equation
\begin{equation}
\label{finaleq}
y^2=(x-1)\prod_{i=1}^{2g+1}\left(x-\beta_i\right), \qquad \beta_i^2=\frac{x_Q-\alpha_i}{x_P-\alpha_i}, \prod_{i=1}^{2g+1} \beta_i=\frac{y_Q}{y_P}.
\end{equation}
If $g$ is even, then
\begin{equation*}
\frac{(1-t^2)^{g+2}}{(t-1)^2}(h(x(t))+z(t))=c'(t+1)\prod_{i=1}^{2g+1}\left(t-\beta_i\right),
\end{equation*}
for some constant $c'$ and where $\prod_{i=1}^{2g+1} \beta_i=-\frac{y_Q}{y_P}$.  Replacing $t$ by $-t$, we again see that $\tilde{C}_1$ has a Weierstrass equation \eqref{finaleq}.

In the coordinates of \eqref{finaleq}, the tower $\tilde{C}_1\stackrel{\pi_1}{\rightarrow} C_1\stackrel{p_1}{\rightarrow}\mathbb{P}^1$ is given by the hyperelliptic map $(x,y)\mapsto x$ followed by the squaring map $C_1=\mathbb{P}^1\to \mathbb{P}^1$, $x\mapsto x^2$.  The tower $\tilde{C}_1\stackrel{\pi_1}{\rightarrow} C_1\stackrel{p_1}{\rightarrow}\mathbb{P}^1$ is associated to the tower $\tilde{C}\stackrel{\pi}{\rightarrow} C\stackrel{p}{\rightarrow}\mathbb{P}^1$, and from the above, $\tilde{C}_1$ has an equation given by \eqref{finaleq}.  Note that there are exactly $2^{2g}$ possibilities for the curve \eqref{finaleq}, and hence for the tower $[\tilde{C}_1\stackrel{\pi_1}{\rightarrow} C_1\stackrel{p_1}{\rightarrow}\mathbb{P}^1]$, given by the $2^{2g}$ possibilities for $\beta_i$ subject to the constraint $\prod_{i=1}^{2g+1} \beta_i=\frac{y_Q}{y_P}$.  Since the map
\begin{equation*}
[\tilde{C}\stackrel{\pi}{\rightarrow} C\stackrel{p}{\rightarrow}\mathbb{P}^1]\mapsto [\tilde{C}_1\stackrel{\pi_1}{\rightarrow} C_1\stackrel{p_1}{\rightarrow}\mathbb{P}^1]
\end{equation*}
given by the tower \eqref{tower} is injective and there are $2^{2g}$ possibilities for $[\tilde{C}\to C\to \mathbb{P}^1]$, it follows that all of the $2^{2g}$ possibilities for $\tilde{C}_1$ arise from some covering $\tilde{C}\to C$, proving the theorem.
\end{proof}

\begin{corollary}
\label{cormain}
Let $C$ be a hyperelliptic curve over a number field $k$ of genus $g$ given by an equation
\begin{equation*}
y^2=\prod_{i=1}^{2g+1}(x-\alpha_i), \quad \alpha_i\in k.
\end{equation*}
Let $f\in k(C)$ be a nonconstant rational function and $S$ a finite set of places of $k$ containing the archimedean places.  Fix a pole $Q=(x_Q,y_Q)\in C(\kbar)$ of $f$.  There exists a number field $L\supset k$ and a finite set of places $T$ of $L$ such that for any $P=(x_P,y_P)\in C(f,k,S)$ with $x_P\neq x_Q$, $y_P\neq 0$, any $\beta_i$ satisfying $\beta_i^2=\frac{x_Q-\alpha_i}{x_P-\alpha_i}, \prod_{i=1}^{2g+1} \beta_i=\frac{y_Q}{y_P}$, and some $c=c(\beta_1,\ldots,\beta_{2g+1})\in L^*$, the Jacobian $\Jac(X_P)$ of the curve
\begin{equation*}
X_P: y^2=c(x-1)\prod_{i=1}^{2g+1}\left(x-\beta_i\right)
\end{equation*}
is defined over $L$ and has good reduction outside of $T$.
\end{corollary}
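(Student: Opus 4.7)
The plan is to combine the three main tools developed so far: Parshin's construction (Theorem \ref{Par}), the identification of the Prym variety of a hyperelliptic double cover with a hyperelliptic Jacobian (Theorems \ref{Dal} and \ref{bij}), and Serre's lemma on good reduction in isogenies (Lemma \ref{Serre}). I begin by applying Theorem \ref{Par} to the data $(C,f,S,Q)$, which produces a number field $L\supset k$ and a finite set of places $T$ of $L$ depending only on this data. Now fix $P=(x_P,y_P)\in C(f,k,S)$ with $x_P\neq x_Q$ and $y_P\neq 0$, and fix an admissible tuple $(\beta_i)$ as in the statement. By Theorem \ref{bij}, this tuple specifies, up to equivalence over $\kbar$, a particular double cover $\pi_P:\tilde{C}_P\to C$ ramified exactly above $P$ and $Q$. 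Theorem \ref{Par} then supplies an $L$-model $\pi'_P:\tilde{C}'_P\to C$ of $\pi_P$, with the same ramification and with $\tilde{C}'_P$ having good reduction outside $T$.

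Since $\tilde{C}'_P$ has good reduction outside $T$, so does $\Jac(\tilde{C}'_P)$. Applying Lemma \ref{Serre} to the degree-two cover $\pi'_P$ (which is ramified above exactly two points, as required), I conclude that the Prym variety $\Prym(\tilde{C}'_P/C)$, naturally defined over $L$, has good reduction outside $T$.

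It remains to identify $\Prym(\tilde{C}'_P/C)$ with the Jacobian of the stated hyperelliptic curve. By Theorem \ref{Dal}, $\Prym(\tilde{C}'_P/C)\cong \Jac(X_P)$ over $\kbar$ for some hyperelliptic curve $X_P$, and by the explicit calculation in the proof of Theorem \ref{bij}, $X_P$ admits over $\kbar$ the Weierstrass equation $y^2=(x-1)\prod_{i=1}^{2g+1}(x-\beta_i)$ for the prescribed $(\beta_i)$. The key observation is that every curve and morphism appearing in the tower \eqref{tower} is built from $\pi'_P$, $C$, and the hyperelliptic map $p$ by forming fibre products and quotients by the natural involutions, all of which are defined over $L$; in particular $X_P=\tilde{C}_1$ descends to $L$. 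A hyperelliptic Weierstrass equation for $X_P$ over $L$ therefore takes the form $y^2=c(x-1)\prod(x-\beta_i)$, where $c\in L^*$ depends on $(\beta_i)$ and absorbs the leading coefficient computed in the proof of Theorem \ref{bij}, which need not be a square in $L$. Together with the previous paragraph, this shows that $\Jac(X_P)\cong \Prym(\tilde{C}'_P/C)$ is an abelian variety over $L$ with good reduction outside $T$.

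The only real technical point is the descent argument in the last paragraph: verifying that the explicit $\kbar$-computation of the Weierstrass equation in the proof of Theorem \ref{bij} can be made rational over $L$ up to a single multiplicative twist $c\in L^*$. Given the functoriality of the tower construction --- successive fibre products and quotients by natural involutions, all starting from data already defined over $L$ --- this reduces to a direct check that can be read off the formulas in the proof of Theorem \ref{bij}, and is more a matter of bookkeeping than a genuine obstacle.
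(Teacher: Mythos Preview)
Your overall strategy matches the paper's: apply Theorem \ref{Par}, identify the Prym variety via Theorems \ref{tDal} and \ref{bij}, and invoke Lemma \ref{Serre}. There is, however, one genuine gap in your descent paragraph.

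You correctly note that the tower \eqref{tower} is built from $L$-rational data by fibre products and quotients, so $\tilde{C}_1$ descends to $L$. But this does \emph{not} yet yield the Weierstrass equation $y^2=c(x-1)\prod_{i}(x-\beta_i)$ over $L$ with a single scalar twist $c\in L^*$: that particular equation requires each $\beta_i$ individually to lie in $L$, whereas a priori $\beta_i=\sqrt{(x_Q-\alpha_i)/(x_P-\alpha_i)}$ lives only in $\kbar$. The field $L$ coming out of Theorem \ref{Par} has no reason to contain these square roots, and the leading-coefficient twist you mention does not address this. So the sentence ``A hyperelliptic Weierstrass equation for $X_P$ over $L$ therefore takes the form $y^2=c(x-1)\prod(x-\beta_i)$'' is not justified as written.

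The paper closes this gap by enlarging $L$ once more, uniformly in $P$. The point is that for any $P'=(x',y')\in C(k)$ and any $i$ one has $x'-\alpha_i=a_i z_i^2$ with $z_i\in k$ and $a_i\in k$ divisible only by primes dividing $\disc\prod_j(x-\alpha_j)$; this is the standard $2$-descent observation for hyperelliptic curves. Hence $\sqrt{x'-\alpha_i}=z_i\sqrt{a_i}$ lies in $k(\sqrt{a_i})$, and since only finitely many $a_i$ arise up to squares, adjoining all such $\sqrt{a_i}$ gives a fixed finite extension of $L$, independent of $P$, containing every $\beta_i$. After replacing $L$ by this extension (and $T$ by the places above it), your argument goes through verbatim. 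This is a short but genuinely arithmetic step, not bookkeeping that can be read off the $\kbar$-formulas in Theorem \ref{bij}.
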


\begin{proof}
Without loss of generality, we may assume that $Q\in C(k)$.  Let $L$ and $T$ be as in Theorem \ref{Par}.  Let $P=(x_P,y_P)\in C(f,k,S)$ with $x_P\neq x_Q$, $y_P\neq 0$.  Let $\beta_i$, $i=1,\ldots, 2g+1$, satisfy $\beta_i^2=\frac{x_Q-\alpha_i}{x_P-\alpha_i}, \prod_{i=1}^{2g+1} \beta_i=\frac{y_Q}{y_P}$.  If $P'=(x',y')\in C(k)$, we have $x'-\alpha_i=a_iz_i^2$ for some $z_i\in k$ and some $a_i\in k$ divisible only by primes of $k$ dividing the discriminant of $\prod_{i=1}^{2g+1}(x-\alpha_i)$.  Then there is a fixed finite extension of $L$ (independent of $P'\in C(k)$) containing $\sqrt{x'-\alpha_i}=z_i\sqrt{a_i}$ for all $i$.  By replacing $L$ by this finite extension (and replacing $T$ by the set of places lying above it), we can assume that we always have $\beta_i\in L$, $i=1,\ldots, 2g+1$.

Let $X_P'$ be the curve $y^2=(x-1)\prod_{i=1}^{2g+1}\left(x-\beta_i\right)$.  By Theorem~\ref{Par} and Theorem~\ref{bij}, there is a double covering $\pi_P:\tilde{C}_P\to C$, ramified exactly above $P$ and $Q$, that corresponds to $\tilde{C}_1=X_P'$ via the tower \eqref{tower}, such that both $\pi_P$ and $\tilde{C}_P$ are defined over $L$ and both $\tilde{C}_P$ and $C$ have good reduction outside $T$.  By Theorem \ref{tDal},
\begin{equation*}
\Prym(\tilde{C}_P/C)\cong \Jac(X_P')
\end{equation*}
over $\Lbar$.  In fact, the proofs of Theorems \ref{tDal} and \ref{bij} show that
\begin{equation*}
\Prym(\tilde{C}_P/C)\cong \Jac(X_P)
\end{equation*}
over $L$, where $X_P$ is some quadratic twist of $X_P'$ given by
\begin{equation*}
X_P: y^2=c(x-1)\prod_{i=1}^{2g+1}\left(x-\beta_i\right),
\end{equation*}
for some $c\in L^*$.  Since both $\tilde{C}_P$ and $C$ have good reduction outside $T$, by Lemma~\ref{Serre}, $\Jac(X_P)$ has good reduction outside $T$.
\end{proof}

An alternative, more direct proof of this result follows from Theorems \ref{binthm} and \ref{gr} in Section \ref{ls}.

\section{Main Theorem}

We now prove the main theorem from the introduction.

\begin{theorem}
\label{mainthm}
Let $g\geq 2$ be an integer.  Suppose that for any number field $k$ and any finite set of places $S$ of $k$ the set $\mathcal{H}'(g,k,S)$ is effectively computable (e.g., an explicit hyperelliptic Weierstrass equation for each element of the set is given).  Then for any number field $k$, any finite set of places $S$ of $k$, any hyperelliptic curve $C$ over $k$ of genus $g$, and any rational function $f\in k(C)$, the set of $S$-integral points with respect to $f$,
\begin{equation*}
C(f,k,S)=\{P\in C(k)\mid f(P)\in \O_{k,S}\},
\end{equation*}
is effectively computable.
\end{theorem}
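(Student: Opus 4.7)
The plan is to use Corollary \ref{cormain} to attach to each integral point $P \in C(f,k,S)$ a hyperelliptic curve $X_P$ whose $L$-isomorphism class lies in the computable set $\mathcal{H}'(g,L,T)$, and then to effectively invert this assignment.

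First I would carry out some standard reductions. A genus-$g$ hyperelliptic curve over $k$ admits a degree-$(2g+1)$ Weierstrass equation if and only if it has a rational Weierstrass point; adjoining a single root of a degree-$(2g+2)$ Weierstrass polynomial yields such a point over an explicit finite extension of $k$. Since $C(f,k,S) \subseteq C(f,k',S')$ for any finite explicit extension $k'/k$ with $S'$ the places of $k'$ above $S$, I may pass to such a $k'$ and assume $C\colon y^2 = \prod_{i=1}^{2g+1}(x-\alpha_i)$ with $\alpha_i \in k$. A further explicit extension ensures that $f$ has a pole at some $Q = (x_Q,y_Q) \in C(k)$. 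The finitely many points of $C(f,k,S)$ with $y_P = 0$ are exactly the $k$-rational Weierstrass points, which are explicit, and the at most two points with $x_P = x_Q$ are likewise explicit. Removing these, it suffices to compute effectively the remaining points, to which Corollary \ref{cormain} applies directly.

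For each such $P$, Corollary \ref{cormain} produces, independently of $P$, an explicit finite extension $L \supset k$ and an explicit finite set of places $T$ of $L$, together with a hyperelliptic curve
\[
X_P \colon y^2 = c(x-1)\prod_{i=1}^{2g+1}(x-\beta_i), \qquad \beta_i^2 = \frac{x_Q-\alpha_i}{x_P-\alpha_i}, \quad \prod_{i=1}^{2g+1}\beta_i = \frac{y_Q}{y_P},
\]
for some $c \in L^*$, whose Jacobian is defined over $L$ with good reduction outside $T$. Hence $[X_P] \in \mathcal{H}'(g,L,T)$. By hypothesis, I am given an explicit finite list of Weierstrass equations realizing every element of $\mathcal{H}'(g,L,T)$.

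The final step, which I expect to be the main obstacle, is to invert $P \mapsto [X_P]$ effectively. For each candidate curve $X$ in the list, I read off its $2g+2$ branch points in $\mathbb{P}^1(\Lbar)$; this set is determined by $[X]$ only up to a Möbius transformation of $\mathbb{P}^1$, and the twist parameter $c$ does not enter the formula for $x_P$. Solving $\beta_i^2 = (x_Q-\alpha_i)/(x_P-\alpha_i)$ for $x_P$ yields the key relation
\[
x_P = \alpha_i + \frac{x_Q-\alpha_i}{\beta_i^2},
\]
so once a single $\beta_i$ is labelled against its matching $\alpha_i$, the candidate $x_P$ is determined. I would then enumerate the finitely many Möbius normalizations sending one branch point to $1$, the finitely many bijections between the remaining $2g+1$ points and the indices $1,\ldots,2g+1$, test the resulting $x_P$ for consistency across $i$, verify the product constraint (which determines $y_P$), and check membership in $C(f,k,S)$. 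The hard part is precisely this bookkeeping: although $[X_P]$ determines the $\beta_i$ only up to Möbius action and quadratic twist, the explicit formula above makes the enumeration finite and algorithmic, so a superset of $C(f,k,S)$ is obtained effectively, and $C(f,k,S)$ itself is extracted by inspection.
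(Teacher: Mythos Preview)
Your reductions and the use of Corollary~\ref{cormain} to obtain the map $P\mapsto [X_P]\in\mathcal{H}'(g,L,T)$ match the paper's argument exactly. The gap is in your inversion step. You write that you will ``enumerate the finitely many M\"obius normalizations sending one branch point to $1$,'' but fixing the image of a single point leaves a two-parameter family of M\"obius transformations, not finitely many. Consequently the $\beta_i$---which are the images of the branch points of the candidate curve under this \emph{unknown} transformation---are not determined by your combinatorial choices, and so your formula $x_P=\alpha_i+(x_Q-\alpha_i)/\beta_i^2$ cannot be applied. You have correctly identified the M\"obius ambiguity as ``the hard part,'' but your bookkeeping does not actually eliminate it.

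The paper handles precisely this issue by passing to a M\"obius-invariant quantity: the cross-ratio $\CR(\beta_1,\beta_2,\beta_3,\beta_4)$ of four of the roots. On the side of a candidate $C'\in\mathcal{H}'(g,L,T)$ with Weierstrass roots $\gamma_1,\ldots,\gamma_{2g+2}$, the possible values $\CR(\gamma_i,\gamma_j,\gamma_k,\gamma_l)$ form an explicit finite list, computable directly from any given equation for $C'$ without ever finding the M\"obius transformation. On the side of $X_P$, one regards $\CR(\beta_1,\beta_2,\beta_3,\beta_4)$ as a rational function on the curve cut out by $z_i^2=x-\alpha_i$, $i=1,\ldots,4$; the key technical point---established via Kummer theory, since $[\kbar(x,z_1,\ldots,z_4):\kbar(x)]=16$---is that this function is \emph{nonconstant} in $x$, so each target value determines only finitely many $x_P$. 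Supplying this cross-ratio argument (or some equivalent M\"obius invariant together with a proof of its nonconstancy) is exactly what your proposal is missing.
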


\begin{proof}
Without loss of generality, by enlarging $k$ we can assume that every Weierstrass point of $C$ is $k$-rational and that some pole $Q=(x_Q,y_Q)$ of $f$ is $k$-rational.  Then $C$ can be given by a hyperelliptic Weierstrass equation
\begin{equation*}
C:y^2=\prod_{i=1}^{2g+1}(x-\alpha_i), \quad \alpha_i\in k.
\end{equation*}
Let $U$ consist of the set of Weierstrass points of $C$ along with $Q$ and its image under the hyperelliptic involution.  By Corollary \ref{cormain}, for some number field $L$ and some finite set of places $T$ of $L$, we have a map (arbitrarily choosing among the choices for $X_P$)
\begin{align*}
C(f,k,S)\setminus U&\to \mathcal{H}'(g,L,T),\\
P&\mapsto X_P.
\end{align*}
Explicitly, we can compute $C(f,k,S)$ from $\mathcal{H}'(g,L,T)$ as follows.  Recall that for $P=(x_P,y_P)\in C(f,k,S)\setminus U$, $X_P$ is defined by an equation
\begin{equation*}
y^2=c_P(x-1)\prod_{i=1}^{2g+1}\left(x-\sqrt{\frac{x_Q-\alpha_i}{x_P-\alpha_i}}\right),
\end{equation*}
for some $c_P\in L^*$ and some appropriate choice of the square roots.  We pick four of the roots of the polynomial on the right-hand side, say $\beta_i=\sqrt{\frac{x_Q-\alpha_i}{x_P-\alpha_i}}, i=1,2,3,4$, and consider the cross-ratio
\begin{equation*}
\CR(\beta_1,\beta_2,\beta_3,\beta_4)=\frac{(\beta_1-\beta_3)(\beta_2-\beta_4)}{(\beta_2-\beta_3)(\beta_1-\beta_4)}.
\end{equation*}
Alternatively, we consider the rational function
\begin{equation}
\label{rat}
\frac{(c_1z_3-c_3z_1)(c_2z_4-c_4z_2)}{(c_2z_3-c_3z_2)(c_1z_4-c_4z_1)}
\end{equation}
on the curve defined by $z_i^2=x-\alpha_i$, $i=1,2,3,4$, where we view $c_i=\sqrt{x_Q-\alpha_i}$, $i=1,2,3,4$, as fixed constants.  Note that by Kummer theory, since the $\alpha_i$ are distinct, we have $[\kbar(x,z_1,\ldots,z_4):\kbar(x)]=2^4=16$.  This immediately implies that the rational function \eqref{rat} is nonconstant and thus that the equation $\CR(\beta_1,\beta_2,\beta_3,\beta_4)=\alpha$ has only finitely many solutions in $x_P$ for any $\alpha\in \kbar$.  Now let $C'\in \mathcal{H}'(g,L,T)$, given by a Weierstrass equation $C':y^2=c'\prod_{i=1}^{2g+2}(x-\gamma_i)$.  If $X_P\cong C'$, then $\CR(\beta_1,\beta_2,\beta_3,\beta_4)=\CR(\gamma_i,\gamma_j,\gamma_k,\gamma_l)$ for some $i,j,k,l\in \{1,\ldots, 2g+2\}$.  Since there are only finitely many possible cross-ratios $\CR(\gamma_i,\gamma_j,\gamma_k,\gamma_l)$, we find that there are only finitely many (explicitly computable) possible points $P=(x_P,y_P)$ with $X_P\cong C'$.  Finally, for each such possible point $P$ and each point $P\in U$, we check if $P\in C(f,k,S)$.
\end{proof}

\section{Binary Forms}
\label{ls}

In this section we give a reformulation of some of our results in terms of binary forms.  Let $k$ be a number field and $S$ a finite set of places of $k$ (which we always assumes contains the archimedean places).  Let $F(X,Z), G(X,Z)\in k[X,Z]$ be binary forms.  Let $U=\left( \begin{array}{cc}
a & b  \\
c & d
\end{array} \right)$, $a,b,c,d\in k$, be a matrix.  Define $F_U(X,Z)=F(aX+bZ,cX+dZ)$.  We will call $F$ and $G$ equivalent if there exists $U\in \GL_2(k)$ and $\lambda\in k^*$ such that $G(X,Z)=\lambda F_U(X,Z)$.  Denote the equivalence class containing $F$ by $[F]$.  Let $\disc(F)$ denote the discriminant of $F$.  Define
\begin{equation*}
\mathcal{B}(r,k,S)=\{[F]\mid F\in \O_{k,S}[X,Z] \text{ is a binary form of degree $r$ and $\disc(F)\in \O_{k,S}^*$}\}.
\end{equation*}

Effective finiteness of the set $\mathcal{B}(r,k,S)$ follows from work of Evertse and Gy\"ory.
\begin{theorem}[Evertse, Gy\"ory \cite{EG}]
Let $r\geq 2$ be an integer, $k$ a number field, and $S$ a finite set of places of $k$.  The set $\mathcal{B}(r,k,S)$ is finite and effectively computable.
\end{theorem}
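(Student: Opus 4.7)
The plan is to reduce the effective finiteness of $\mathcal{B}(r,k,S)$ to the classical effective theory of the $S$-unit equation via Baker's method of linear forms in logarithms. Over $\bar{k}$ the form factors as $F(X,Z) = a\prod_{i=1}^r (X - \alpha_i Z)$ (after swapping $X$ and $Z$ if $Z \mid F$, a harmless operation under the $\GL_2(k)$-action), with $\disc(F)$ proportional to $a^{2r-2}\prod_{i<j}(\alpha_i - \alpha_j)^2$. The first task is to bound the field of definition: the order $\O_k[\alpha_i]$ has discriminant dividing $\disc(F) \in \O_{k,S}^*$, so each extension $k(\alpha_i)/k$ is unramified outside $S$. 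The effective form of Hermite's theorem then yields a finite, effectively computable list of possible splitting fields $k'$ of degree at most $r$. After replacing $k$ by $k'$ and $S$ by an effective enlargement $S'$ (chosen so that $\O_{k', S'}$ is a principal ideal domain and so that, in every equivalence class representative, the leading coefficient $a$ may be taken to be an $S'$-unit), one may assume all roots are $k'$-rational and every difference $\alpha_i - \alpha_j$ lies in $\O_{k', S'}^*$.

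Next I would use the $\mathrm{PGL}_2(k')$-action, possibly after a further effective extension so that a chosen triple of roots becomes individually $k'$-rational, to send three of the $\alpha_i$ to $0$, $1$, $\infty$ when $r \geq 3$. The remaining roots $\gamma \in k'$ then satisfy $\gamma \in \O_{k', S'}^*$ (difference from $0$) and $1 - \gamma \in \O_{k', S'}^*$ (difference from $1$), so $\gamma$ is a solution of the $S'$-unit equation $\gamma + (1 - \gamma) = 1$. By the effective finiteness of solutions to the $S$-unit equation in two variables, there are effectively finitely many candidates for each $\gamma$, hence finitely many possibilities for the normalized tuple of roots, hence for $[F]$. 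The case $r = 2$ reduces, after normalizing the two roots to $\{0, \infty\}$, to the finiteness of $\O_{k,S}^* / (\O_{k,S}^*)^2$, which is effectively computable.

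The main obstacle is the descent from $\GL_2(k')$-equivalence back to $\GL_2(k)$-equivalence: a single $\GL_2(k)$-orbit over $k$ may fracture into several $\GL_2(k')$-orbits, and one needs to enumerate the Galois cocycles that distinguish them. Because $k'/k$ is effectively known and of bounded degree, this enumeration is finite and effective. A further subtlety is that the triple of roots used for the $\mathrm{PGL}_2$ normalization need not be Galois-stable, so one must either choose a Galois-invariant triple when possible or else track how permuting the normalized triple affects the resulting $S'$-unit data. Once these bookkeeping issues are resolved, Baker's theorem supplies explicit height bounds on the surviving candidates, completing the proof of the effective finiteness of $\mathcal{B}(r,k,S)$.
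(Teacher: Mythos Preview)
The paper does not supply its own proof of this theorem: it is quoted as a result of Evertse and Gy\H{o}ry \cite{EG} and used as a black box, so there is no in-paper argument to compare against. Your sketch is, in outline, the standard route to this result (and close to what Evertse--Gy\H{o}ry actually do): control the splitting field via Hermite, pass to a larger $(k',S')$ where the roots split and root-differences are $S'$-units, normalize three roots by $\mathrm{PGL}_2$, and feed the remaining cross-ratios into the effective two-term $S$-unit equation.

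One point to correct: your description of the descent step is inverted. Since $\GL_2(k)\subset\GL_2(k')$, a single $\GL_2(k)$-orbit of $k$-forms lands in a \emph{single} $\GL_2(k')$-orbit, not several; the issue is the converse, namely that one $\GL_2(k')$-orbit may contain many $\GL_2(k)$-orbits of $k$-rational forms. That is the genuine descent problem, and it is indeed governed by a finite (and effectively enumerable) set of Galois $1$-cocycles with values in the stabilizer of the normalized form, so your conclusion stands. You should also be a bit more careful with the claim that one can arrange the leading coefficient to be an $S'$-unit uniformly: this is true, but it uses the unit-discriminant hypothesis together with a Gauss-lemma-type argument, not merely an enlargement of $S'$ that is independent of $F$.
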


We now define a larger, but related, set $\mathcal{B'}(r,k,S)\supset \mathcal{B}(r,k,S)$.  The set $\mathcal{B'}(r,k,S)$ contains equivalence classes of certain binary forms $F$ whose discriminant is an $S$-unit outside of primes $\mathfrak{p}$ where $F \pmod{\mathfrak{p}}$ has a factor of multiplicity $\geq 3$.  An effective procedure for computing $\mathcal{B}'(r,k,S)$ would give a solution to Problem \ref{mp} (Corollary \ref{ce}).

More precisely, define $\mathcal{B}'(r,k,S)$ to be the set of equivalence classes of binary forms over $k$ of degree $r$ such that there exists a representative $F\in \O_{k,S}[X,Z]$ satisfying:\\

If $\mathfrak{p}\not\in S$ and $\ord_{\mathfrak{p}}\disc(F)>0$, then $\ord_{\mathfrak{p}}\disc(F)=2mn(n-1)$, where $m$ is some positive integer, $n$ is an odd integer with $3\leq n\leq 2[(r+1)/2]-3$, and $f(x)=F(x,1)$ has $n$ roots $\alpha_1,\ldots,\alpha_n\in k$ with $\ord_{\mathfrak{p}}\alpha_i=2m$, $i=1,\ldots, n$.\\

If $\disc(F)\in \O_{k,S}^*$ or $r\leq 4$, then the above condition is vacuous.  So we trivially have $\mathcal{B}(r,k,S)\subset \mathcal{B}'(r,k,S)$ for all $r$ and $\mathcal{B}(r,k,S)=\mathcal{B}'(r,k,S)$ if $1\leq r\leq 4$.

\begin{theorem}
\label{binthm}
Let $C$ be a hyperelliptic curve over a number field $k$ of genus $g$ given by an equation
\begin{equation*}
y^2=h(x)=\prod_{i=1}^{2g+1}(x-\alpha_i), \quad \alpha_i\in k.
\end{equation*}
Let $f\in k(C)$ be a nonconstant rational function and $S$ a finite set of places of $k$ containing the archimedean places.  Fix a pole $Q=(x_Q,y_Q)\in C(\kbar)$ of $f$.  Let $U$ be the set of Weierstrass points of $C$ along with $Q$ and its image under the hyperelliptic involution. Then there exists a number field $L$ and finite set of places $T$ of $L$ such that we have a well-defined map
\begin{align*}
C(f,k,S)\setminus U&\to \mathcal{B}'(2g+2,L,T),\\
P=(x_P,y_P)&\mapsto \left[(X-Z)\prod_{i=1}^{2g+1}(X-\beta_iZ)\right],
\end{align*}
where the $\beta_i$ are chosen such that $\beta_i^2=\frac{x_Q-\alpha_i}{x_P-\alpha_i}$ and $\prod_{i=1}^{2g+1} \beta_i=\frac{y_Q}{y_P}$.
\end{theorem}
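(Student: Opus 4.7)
The plan is to combine Corollary \ref{cormain} with a reduction-theoretic analysis of the binary form that appears in its conclusion, translating ``Jacobian good reduction'' into the structural condition on $\disc$ that defines $\mathcal{B}'$.

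First, I invoke Corollary \ref{cormain} to produce a number field $L \supset k$ and a finite set of places $T$ of $L$, depending only on the data $(k, S, C, f, Q)$ and not on $P$, such that for every $P = (x_P, y_P) \in C(f, k, S) \setminus U$ and every admissible choice of square roots $\beta_i$, the $\beta_i$ lie in $L$ and the Jacobian of
\begin{equation*}
X_P : y^2 = c_P(x-1)\prod_{i=1}^{2g+1}(x - \beta_i), \quad c_P \in L^*,
\end{equation*}
has good reduction outside $T$. I then form the binary form $F(X,Z) = (X-Z)\prod_{i=1}^{2g+1}(X - \beta_i Z) \in L[X,Z]$. Its equivalence class $[F]$ under $\GL_2(L)$ combined with $L^*$-scaling records the unordered configuration of the $2g+2$ roots $\{1, \beta_1, \ldots, \beta_{2g+1}\}$ on $\mathbb{P}^1$ modulo M\"obius, which is precisely the data encoding $X_P$ as a hyperelliptic curve up to its quadratic twist by $c_P$.

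Next, after enlarging $T$ once and for all (independently of $P$) to contain the places above $2$ and the finitely many places at which an integral rescaling and an integral $\GL_2$-transformation of $F$ must be absorbed, I obtain a representative $F^* \in \O_{L,T}[X,Z]$ of $[F]$. The crux is then to verify the defining condition of $\mathcal{B}'(2g+2, L, T)$ at each $\mathfrak{p} \notin T$ with $\ord_\mathfrak{p}\disc(F^*) > 0$. At such a prime, $X_P$ has bad reduction, but $\Jac(X_P)$ does not. I invoke a reduction criterion for hyperelliptic Jacobians (the role played by Theorem \ref{gr} in Section \ref{ls}, in line with Liu's work on stable reduction of hyperelliptic curves in residue characteristic $\neq 2$) to conclude that, after an integral change of coordinates centered at $\mathfrak{p}$, the $2g+2$ roots of $F^*(x,1)$ must cluster into a single group of $n$ roots $\alpha_1,\ldots,\alpha_n$ with $n$ odd, $3 \leq n \leq 2g-1$, all satisfying $\ord_\mathfrak{p}\alpha_i = 2m$ for a common $m \geq 1$ and with pairwise differences of exact valuation $2m$, while the remaining $2g+2-n$ roots reduce to distinct values in the residue field. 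A direct evaluation of $\disc(F^*)$ using $\prod_{i<j}(r_i - r_j)^2$ (times the appropriate power of a unit leading coefficient, arranged in the rescaling step) then yields $\ord_\mathfrak{p}\disc(F^*) = 2mn(n-1)$, which is exactly the condition defining $\mathcal{B}'$.

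The main obstacle will be this last reduction-theoretic step: extracting the precise clustering pattern of the roots at a prime where the Jacobian is smooth but the curve is singular, and in particular ruling out any sub-clustering within the cluster that would inflate $\ord_\mathfrak{p}\disc(F^*)$ beyond the prescribed $2mn(n-1)$. This lies beyond elementary manipulation of Weierstrass equations and requires the stable reduction theory of hyperelliptic curves (the content of Theorem \ref{gr}). Once this ingredient is in hand, the discriminant bookkeeping, the passage from $L(C)$-coefficients to $\O_{L,T}$-coefficients, and the uniformity of $L$ and $T$ in $P$ are all routine consequences of Corollary \ref{cormain} and the definition of $\mathcal{B}'$.
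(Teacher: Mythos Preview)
Your proposal has a genuine gap in its central step, and it also misses the point of the paper's argument.

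You reduce everything to: from $[X_P]\in\mathcal{H}'(g,L,T)$ (furnished by Corollary~\ref{cormain}), deduce that the associated binary form lies in $\mathcal{B}'(2g+2,L,T)$. This is the implication $\mathcal{H}'\Rightarrow\mathcal{B}'$, i.e.\ the \emph{converse} of Theorem~\ref{gr}. Theorem~\ref{gr} only gives $\mathcal{B}'\Rightarrow\mathcal{H}'$; the paper establishes the converse direction solely for $g=2$ (Theorem~\ref{teq}, via Liu) and explicitly says the higher-genus analogue is merely ``plausible''. So the black box you invoke is not available. Worse, for $g\geq 3$ an arbitrary hyperelliptic curve with good Jacobian reduction can have several root-clusters at a single prime, whereas the $\mathcal{B}'$ condition demands a single odd cluster per prime in some representative; so the general converse you need may well be false. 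Note also that the paper's logical flow is the reverse of yours: it remarks that Corollary~\ref{cormain} can alternatively be deduced \emph{from} Theorems~\ref{binthm} and~\ref{gr}.

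The paper's actual proof of Theorem~\ref{binthm} is entirely elementary and never passes through good reduction of $\Jac(X_P)$ or stable-reduction theory. It works directly with the algebraic identity
\[
\beta_i^2-\beta_j^2=\frac{(x_P-x_Q)(\alpha_j-\alpha_i)}{(x_P-\alpha_i)(x_P-\alpha_j)},
\]
splits into cases according to the sign of $\ord_\mathfrak{p}(x_P-x_Q)$, and in the key case $\ord_\mathfrak{p}(x_P-x_Q)>0$ uses the $S$-integrality of $P$ (encoded as $\ord_\mathfrak{p}(y_P-y_Q)\leq 0$) together with $\prod\beta_i=y_Q/y_P$ to force all $\beta_i\equiv\pm 1\pmod{\mathfrak{p}}$ with an \emph{odd} number congruent to $-1$. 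An explicit $\GL_2(\O_{k,S})$-transformation then separates the two clusters and produces the required discriminant valuation $2m_\mathfrak{p}n_\mathfrak{p}(n_\mathfrak{p}-1)$. The single-cluster shape demanded by $\mathcal{B}'$ thus comes from the special form of the $\beta_i$, not from an abstract reduction criterion; your approach discards exactly this structure.
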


We will see (Corollary \ref{SF}) that $\mathcal{B}'(2g+2,k,S)$ is a finite set.  Then by essentially the same argument as in the proof of Theorem \ref{mainthm}, we obtain the following corollary.

\begin{corollary}
\label{ce}
Let $g\geq 2$ be an integer.  Suppose that for any number field $k$ and any finite set of places $S$ of $k$ the set $\mathcal{B}'(2g+2,k,S)$ is effectively computable.  Then for any number field $k$, any finite set of places $S$ of $k$, any hyperelliptic curve $C$ over $k$ of genus $g$, and any rational function $f\in k(C)$, the set of $S$-integral points with respect to $f$,
\begin{equation*}
C(f,k,S)=\{P\in C(k)\mid f(P)\in \O_{k,S}\},
\end{equation*}
is effectively computable.
\end{corollary}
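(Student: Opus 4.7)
The plan is to repeat the proof of Theorem \ref{mainthm} almost verbatim, with the map into $\mathcal{B}'(2g+2,L,T)$ from Theorem \ref{binthm} taking the place of the map into $\mathcal{H}'(g,L,T)$ from Corollary \ref{cormain}. First I would enlarge $k$ and $S$ so that every Weierstrass point of $C$ is $k$-rational and some pole $Q=(x_Q,y_Q)$ of $f$ is $k$-rational; since enlarging $(k,S)$ only enlarges $C(f,k,S)$, effectively computing the enlarged set and intersecting with $C(k)$ recovers the original set. Then $C$ has a Weierstrass equation $y^2=\prod_{i=1}^{2g+1}(x-\alpha_i)$ with $\alpha_i \in k$, and the finite set $U$ consisting of the Weierstrass points of $C$ together with $Q$ and its image under the hyperelliptic involution is explicitly known; we check membership of each of its points in $C(f,k,S)$ directly.

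For points outside $U$, Theorem \ref{binthm} furnishes a number field $L$, a finite set of places $T$ of $L$, and a well-defined map
\begin{equation*}
\Phi\colon C(f,k,S) \setminus U \to \mathcal{B}'(2g+2,L,T),\qquad (x_P,y_P)\mapsto \left[(X-Z)\prod_{i=1}^{2g+1}(X-\beta_i Z)\right],
\end{equation*}
where $\beta_i^2=(x_Q-\alpha_i)/(x_P-\alpha_i)$ and $\prod \beta_i = y_Q/y_P$. The target set is finite by Corollary \ref{SF} and, by hypothesis, effectively computable, so the problem reduces to determining the fiber $\Phi^{-1}([F])$ for each equivalence class $[F]$ represented by an explicit binary form $F \in \O_{L,T}[X,Z]$ with roots $\gamma_1,\ldots,\gamma_{2g+2} \in \mathbb{P}^1(\Lbar)$.

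To unravel the fibers I would use the cross-ratio argument from the proof of Theorem \ref{mainthm}. Since the action of $\GL_2$ on $\mathbb{P}^1$ is by M\"obius transformations and scalar multiplication does not move roots, the multiset of cross-ratios of $4$-subsets of roots is an invariant of the equivalence class of a binary form. Hence, whenever $\Phi(P)=[F]$, the value $\CR(\beta_{i_1},\beta_{i_2},\beta_{i_3},\beta_{i_4})$ for any fixed choice $i_1,\ldots,i_4 \in \{1,\ldots,2g+1\}$ must coincide with $\CR(\gamma_{j_1},\gamma_{j_2},\gamma_{j_3},\gamma_{j_4})$ for some $4$-subset of the $\gamma$'s, giving only finitely many target values. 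The Kummer-theoretic argument already carried out in the proof of Theorem \ref{mainthm} shows that the cross-ratio, viewed via the substitution $z_\ell=\sqrt{x_P-\alpha_{i_\ell}}$ on the $2$-Kummer cover built from the four chosen $\alpha$'s, is a nonconstant rational function of $x_P$, so each target value determines only finitely many, explicitly computable, candidate values of $x_P$. Each $x_P$ yields at most two candidate $y_P$, and every resulting candidate $P$ is tested for membership in $C(f,k,S)$ directly.

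The whole argument is a nearly mechanical translation of the proof of Theorem \ref{mainthm}; the only genuinely new input is Corollary \ref{SF} (finiteness of $\mathcal{B}'$), and the only step meriting care is verifying that the cross-ratio $\CR(\beta_{i_1},\ldots,\beta_{i_4})$ is a nonconstant function of $x_P$. That verification, however, is precisely the Kummer-theoretic computation performed in the proof of Theorem \ref{mainthm} and applies here without modification, so I do not expect any serious obstacle.
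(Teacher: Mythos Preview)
Your proposal is correct and matches the paper's own argument exactly: the paper's proof of Corollary~\ref{ce} consists of the single sentence ``by essentially the same argument as in the proof of Theorem~\ref{mainthm}'', and you have faithfully carried out that translation, replacing the map into $\mathcal{H}'(g,L,T)$ from Corollary~\ref{cormain} with the map into $\mathcal{B}'(2g+2,L,T)$ from Theorem~\ref{binthm} and observing that equivalence of binary forms (like $k$-isomorphism of hyperelliptic curves) preserves cross-ratios of roots, so the Kummer-theoretic nonconstancy computation from Theorem~\ref{mainthm} applies verbatim.
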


\begin{proof}[Proof of Theorem \ref{binthm}]
First note that there is an explicit number field $L$, depending on $C$ and $k$, such that for any $P\in C(f,k,S)$ we have $\beta_i\in L$ for all $i$.  Then after enlarging $k$, without of loss of generality it suffices to prove the theorem for points $P\in C(f,k,S)$ such that $\beta_i\in k$, $i=1,\ldots, 2g+1$.  Similarly, by enlarging $S$ we can assume without of loss of generality that $\alpha_i\in \O_{k,S}$, $x_Q-\alpha_i\in \O_{k,S}^*$, $i=1,\ldots, 2g+1$, $\disc(h)\in \O_{k,S}^*$, $\O_{k,S}$ is a principal ideal domain, every place of $k$ lying above $2$ is in $S$, and 
\begin{equation}
\label{meq}
\min\{\ord_\mathfrak{p}(x_P-x_Q),\ord_\mathfrak{p}(y_P-y_Q)\}\leq 0 
\end{equation}
for every $\mathfrak{p}\in M_k\setminus S$ and every $P=(x_P,y_P)\in C(f,k,S)$, where $M_k$ denotes the canonical set of places of $k$ (identifying nonarchimedean places with the corresponding prime ideal).

Let $P\in C(f,k,S)$ with $\beta_i\in k$, $i=1,\ldots, 2g+1$, as in the theorem.  Since $\O_{k,S}$ is principal, we can write $\beta_i=\frac{\gamma_i}{\delta_i}$, where $\gamma_i$ and $\delta_i$ are relatively prime $S$-integers, $i=1,\ldots, 2g+1$.  Consider the binary form
\begin{equation*}
F(X,Z)=(X-Z)\prod_{i=1}^{2g+1}(\delta_iX-\gamma_iZ)\in \O_{k,S}[X,Z],
\end{equation*}
which is equivalent to $(X-Z)\prod_{i=1}^{2g+1}(X-\beta_iZ)$.  Let $\mathfrak{p}\in M_k\setminus S$.  

Suppose that $\ord_{\mathfrak{p}}(x_P-x_Q)=0$.  Then we claim that $\ord_\mathfrak{p}\disc(F)=0$.  For this, it suffices to show that $\ord_\mathfrak{p}(\gamma_i\delta_j-\gamma_j\delta_i)=0$, $i\neq j$, and $\ord_\mathfrak{p}(\gamma_i-\delta_i)=0$ for all $i$.  We have the identity
\begin{equation}
\label{bid}
\beta_i^2-\beta_j^2=\frac{\gamma_i^2\delta_j^2-\gamma_j^2\delta_i^2}{\delta_i^2\delta_j^2}=\frac{x_Q-\alpha_i}{x_P-\alpha_i}-\frac{x_Q-\alpha_j}{x_P-\alpha_j}=\frac{(x_P-x_Q)(\alpha_j-\alpha_i)}{(x_P-\alpha_i)(x_P-\alpha_j)}.
\end{equation}
Since $\disc(h)\in \O_{k,S}^*$, we have $\ord_\mathfrak{p}(\alpha_j-\alpha_i)=0$.  By assumption, $\ord_\mathfrak{p}(x_P-x_Q)=\ord_\mathfrak{p}((x_P-\alpha_i)-(x_Q-\alpha_i))=0$.  This last equality, along with $\ord_\mathfrak{p}(x_Q-\alpha_i)\geq 0$, implies that $\ord_\mathfrak{p}(x_P-\alpha_i)=2\ord_\mathfrak{p}\delta_i$.  Then
\begin{align*}
\ord_\mathfrak{p}(\gamma_i^2\delta_j^2-\gamma_j^2\delta_i^2)&=\ord_\mathfrak{p}((\gamma_i\delta_j-\gamma_j\delta_i)(\gamma_i\delta_j+\gamma_j\delta_i))\\
&=\ord_\mathfrak{p}\frac{\delta_i^2\delta_j^2(x_P-x_Q)(\alpha_j-\alpha_i)}{(x_P-\alpha_i)(x_P-\alpha_j)}=0.
\end{align*}
Since $\gamma_i\delta_j\pm\gamma_j\delta_i$ is an $S$-integer, we find that $\ord_\mathfrak{p}(\gamma_i\delta_j-\gamma_j\delta_i)=0$.  Similarly, we find that $\ord_\mathfrak{p}(\gamma_i-\delta_i)=0$.  So $\ord_\mathfrak{p}\disc(F)=0$ as desired.

Now suppose that $\ord_{\mathfrak{p}}(x_P-x_Q)<0$.  Since $x_Q,\alpha_1,\ldots,\alpha_{2g+1}\in \O_{k,S}$, this implies that $\ord_{\mathfrak{p}}(x_P-x_Q)=\ord_{\mathfrak{p}}x_P=\ord_{\mathfrak{p}}(x_P-\alpha_i)<0$ for all $i$.  Since $x_Q-\alpha_i\in \O_{k,S}^*$ for all $i$, we have $\ord_\mathfrak{p}\gamma_i=-\frac{1}{2}\ord_{\mathfrak{p}}x_P$ for all $i$.  Let $c\in \O_{k,S}$ be such that $\ord_\mathfrak{p}c=\max\{0,-\frac{1}{2}\ord_\mathfrak{p}x_P\}$ for $\mathfrak{p}\not\in S$.  We consider now the binary form
\begin{equation*}
G(X,Z)=(cX-Z)\prod_{i=1}^{2g+1}(\delta_iX-\frac{\gamma_i}{c}Z)\in \O_{k,S}[X,Z].
\end{equation*}
The identity \eqref{bid} easily implies that if $\mathfrak{p}\not\in S$ and $\ord_{\mathfrak{p}}(x_P-x_Q)<0$, then
\begin{equation*}
\ord_\mathfrak{p}(\gamma_i\delta_j-\gamma_j\delta_i)=-\frac{1}{2}\ord_\mathfrak{p}x_P=\ord_\mathfrak{p}c, \quad i\neq j.
\end{equation*}
Then computing $\disc(G)$, we find that $\ord_\mathfrak{p}\disc(G)=0$ if $\mathfrak{p}\not\in S$ and $\ord_{\mathfrak{p}}(x_P-x_Q)\leq 0$.

Finally, suppose that $\ord_\mathfrak{p}(x_P-x_Q)>0$.  Then from \eqref{meq}, we must have $\ord_\mathfrak{p}(y_P-y_Q)= 0$ (the case $\ord_\mathfrak{p}(y_P-y_Q)< 0$ being impossible).  Since $\ord_\mathfrak{p}(x_Q-\alpha_i)=0$ for all $i$, $\ord_\mathfrak{p}(x_P-x_Q)>0$ implies that $\ord_\mathfrak{p}(x_P-\alpha_i)=0$ for all $i$.  Then $\ord_\mathfrak{p}\beta_i=\ord_\mathfrak{p}\delta_i=\ord_\mathfrak{p}\gamma_i=0$ for all $i$.  It follows from \eqref{bid} that
\begin{equation*}
\ord_\mathfrak{p}(\gamma_i^2\delta_j^2-\gamma_j^2\delta_i^2)=\ord_\mathfrak{p}(x_P-x_Q)
\end{equation*}
for $i\neq j$.  Similarly,
\begin{equation*}
\ord_\mathfrak{p}(\gamma_i^2-\delta_i^2)=\ord_\mathfrak{p}(x_P-x_Q)>0
\end{equation*}
for all $i$.  In particular, $\gamma_i\equiv \pm \delta_i\pmod{\mathfrak{p}}$ and $\beta_i\equiv \pm 1 \pmod{\mathfrak{p}}$ for all $i$.  Since  $x_P\equiv x_Q\pmod{\mathfrak{p}}$ and $y_P\not\equiv y_Q\pmod{\mathfrak{p}}$, we have $y_P\equiv -y_Q\pmod{\mathfrak{p}}$.  So 
\begin{equation*}
\prod_{i=1}^{2g+1}\beta_i\equiv \frac{y_Q}{y_P}\equiv -1\pmod{\mathfrak{p}}.
\end{equation*}
Then $\beta_i\equiv -1\pmod{\mathfrak{p}}$ for an odd number $n_{\mathfrak{p}}$ of the elements $i$.  Let $m_{\mathfrak{p}}=\ord_\mathfrak{p}(x_P-x_Q)$.  Then for $i\neq j$, 
\begin{equation*}
\ord_\mathfrak{p}(\gamma_i\delta_j-\gamma_j\delta_i)=
\begin{cases}
m_{\mathfrak{p}} \quad &\text{ if $\beta_i\equiv \beta_j \pmod{\mathfrak{p}}$},\\
0 \quad &\text{ if $\beta_i\not\equiv \beta_j \pmod{\mathfrak{p}}$}.
\end{cases}
\end{equation*}
Now a straight-forward calculation gives
\begin{equation}
\label{oeq}
\ord_\mathfrak{p}\disc(G)=m_{\mathfrak{p}}n_{\mathfrak{p}}(n_{\mathfrak{p}}-1)+m_{\mathfrak{p}}(2g+2-n_{\mathfrak{p}})(2g+2-n_{\mathfrak{p}}-1).
\end{equation}
Let 
\begin{equation*}
\mathcal{P}=\{\mathfrak{p}\in M_k\setminus S\mid \ord_\mathfrak{p}(x_P-x_Q)>0\}=\{\mathfrak{p}\in M_k\setminus S\mid \ord_\mathfrak{p}\disc(G)>0\}.  
\end{equation*}
Let $b\in \O_k$ be such that
\begin{equation*}
2bc\equiv -1\left(\bmod{ \prod_{\mathfrak{p}\in\mathcal{P}}\mathfrak{p}^{m_\mathfrak{p}}}\right).
\end{equation*}
Let $U=\left( \begin{array}{cc}
1 & b  \\
c & 1+bc
\end{array} \right)$.  Then
\begin{equation*}
-G_U(X,Z)=Z\prod_{i=1}^{2g+1}\left((\delta_i-\gamma_i)X+\frac{1}{c}(bc\delta_i-(bc+1)\gamma_i)Z\right).
\end{equation*}
Note that since $\det U=1$, $\disc(G_U)=\disc(G)$.  For $\mathfrak{p}\in \mathcal{P}$, let $\pi_\mathfrak{p}$ be a generator for $\mathfrak{p}\O_{k,S}$.  Let $\epsilon_{i,\mathfrak{p}}=1$ if $\delta_i\equiv \gamma_i\pmod{\mathfrak{p}}$ and $\epsilon_{i,\mathfrak{p}}=0$ otherwise (in which case $\delta_i\equiv -\gamma_i\pmod{\mathfrak{p}}$).  Define $\theta_i=\prod_{\mathfrak{p}\in \mathcal{P}}\pi_{\mathfrak{p}}^{m_\mathfrak{p}\epsilon_{i,\mathfrak{p}}}$ and $\theta_i'=\prod_{\mathfrak{p}\in \mathcal{P}}\pi_{\mathfrak{p}}^{m_\mathfrak{p}(1-\epsilon_{i,\mathfrak{p}})}$.  Consider the binary form
\begin{equation*}
H(X,Z)=Z\prod_{i=1}^{2g+1}\left(\frac{\delta_i-\gamma_i}{\theta_i}X+\frac{\theta_i'}{c}(bc\delta_i-(bc+1)\gamma_i)Z\right).
\end{equation*}
Note that the binary form $H(X,Z)$ is a scalar multiple of $G_U\left(X,\left(\prod_{\mathfrak{p}\in \mathcal{P}}\pi_{\mathfrak{p}}^{m_\mathfrak{p}}\right)Z\right)$.  It follows that $\ord_\mathfrak{p}\disc(H)=0$ if $\mathfrak{p}\in M_k\setminus (\mathcal{P}\cup S)$.  For $\mathfrak{p}\in \mathcal{P}$, from \eqref{oeq} and the definition of $H$, a calculation yields
\begin{equation}
\label{deq}
\ord_\mathfrak{p}\disc(H)=2m_{\mathfrak{p}}n_{\mathfrak{p}}(n_{\mathfrak{p}}-1).
\end{equation}
For $\mathfrak{p}\in \mathcal{P}$,
\begin{equation*}
2(bc\delta_i-(bc+1)\gamma_i)\equiv -(\delta_i+\gamma_i)\pmod{\mathfrak{p}^{m_\mathfrak{p}}}.
\end{equation*}
If $\epsilon_{i,\mathfrak{p}}=0$, then it follows that
$\ord_\mathfrak{p}(\frac{\theta_i'}{c}(bc\delta_i-(bc+1)\gamma_i))\geq 2m_\mathfrak{p}$.  In fact, since there are $n_\mathfrak{p}$ distinct values of $i$ such that $\epsilon_{i,\mathfrak{p}}=0$, by \eqref{deq}, $\ord_\mathfrak{p}(\frac{\theta_i'}{c}(bc\delta_i-(bc+1)\gamma_i))= 2m_\mathfrak{p}$ for at most one value of $i$ with $\epsilon_{i,\mathfrak{p}}=0$.  After an appropriate substitution $X\mapsto X+aZ$, we can force $\ord_\mathfrak{p}(\frac{\theta_i'}{c}(bc\delta_i-(bc+1)\gamma_i))= 2m_\mathfrak{p}$ for all $i$ and $\mathfrak{p}$ such that $\epsilon_{i,\mathfrak{p}}=0$.  Then for each $\mathfrak{p}\in \mathcal{P}$, $H(x,1)$ has $n_\mathfrak{p}$ roots $\alpha$ with $\ord_\mathfrak{p}\alpha=2m_\mathfrak{p}$.  If $n_\mathfrak{p}=2g+1$, then we can replace $H(X,Z)$ by $\pi_\mathfrak{p}^{m_\mathfrak{p}}H\left(X,\frac{Z}{\pi_\mathfrak{p}^{m_\mathfrak{p}}}\right)$, eliminating $\mathfrak{p}$ as a divisor of the discriminant of $H$.  So we have $3\leq n_\mathfrak{p}\leq 2g-1$ for every $\mathfrak{p}\in M_k\setminus S$ with $\ord_{\mathfrak{p}}\disc(H)>0$.  Then from all of the above we have found a binary form equivalent to $(X-Z)\prod_{i=1}^{2g+1}(X-\beta_iZ)$ showing that $[(X-Z)\prod_{i=1}^{2g+1}(X-\beta_iZ)]\in \mathcal{B}'(2g+2,k,S)$.
\end{proof}

We have shown that in order to solve Problem \ref{mp} it suffices to effectively compute, for all values of the parameters, either the set $\mathcal{B}'(2g+2,k,S)$ or the set $\mathcal{H}'(g,k,S)$.  It seems interesting to determine the precise relationship between these two sets.  In the case $g=2$, Liu \cite{Liu2} has given an algorithm to compute, given a Weierstrass equation for a genus two curve, the fibers of a minimal model of the curve (away from primes above $2$, at least).  In particular (see \cite[\S 6]{Liu2}), Liu's algorithm implies that computing the set $\mathcal{B}'(6,k,S)$ for all $k$ and $S$ and computing the set $\mathcal{H}'(2,k,S)$ for all $k$ and $S$ are equivalent problems.  More precisely, Liu's results imply the following relationship.

\begin{theorem}
\label{teq}
Let $k$ be a number field and $S$ a finite set of places of $k$ containing the archimedean places and the places lying above $2$.  There exists a number field $L$ and a finite set of places $T$ of $L$ such that if $C:y^2=f(x)$, $\deg f=6$, is a hyperelliptic curve of genus two representing an element of $\mathcal{H}'(2,k,S)$ and $F(X,Z)$ is the homogenization of $f$, then $[F]\in \mathcal{B}'(6,L,T)$.  Conversely, if $[F]\in \mathcal{B}'(6,k,S)$, then for some constant $c\in k^*$, the equivalence class of the curve $y^2=cF(x,1)$ is in $\mathcal{H}'(2,k,S)$.
\end{theorem}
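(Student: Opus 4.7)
The plan is to invoke Liu's explicit algorithm from \cite{Liu2}, which, given a Weierstrass equation $y^2=f(x)$ with $\deg f=6$ over a discrete valuation ring of odd residue characteristic, computes the type of the geometric special fiber of the minimal regular model and, in particular, the dual graph of the stable reduction.  Combined with the classical criterion that $\Jac(C)$ has good reduction at a place $v$ if and only if the dual graph of the stable reduction of $C$ at $v$ is a tree (equivalently, the toric rank of the N\'eron model vanishes), this makes it possible to read good Jacobian reduction off directly from the polynomial $f$.  The hypothesis that $S$ contains all places above $2$ ensures we always work in odd residue characteristic.

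For the forward direction, we use that in genus two the stable reduction types with tree-shaped dual graph are exactly (i) smooth of genus two (so $C$ has good reduction), and (ii) two smooth elliptic curves meeting transversally at a single node.  Liu's tables pin down the corresponding factorization patterns: at a prime of type (ii) reduction, some admissible change of Weierstrass coordinates brings $f$ into a form where three roots share a common valuation $2m$ (after translation), the remaining three roots are units with distinct reductions, and $\ord_\mathfrak{p}\disc(F)=12m=2mn(n-1)$ with $n=3$.  Since $n=3$ is the only admissible value when $r=6$, this is exactly the $\mathcal{B}'(6,L,T)$ condition.  The extension $L$ is chosen to absorb all admissible coordinate changes uniformly: set $L=k(\sqrt{s_1},\ldots,\sqrt{s_n})$ with $s_1,\ldots,s_n$ generators of $\O_{k,S}^*$ (a finite extension by Dirichlet's unit theorem), exactly as in the proof of Theorem~\ref{Par}; the set $T$ is chosen to contain all places of $L$ above $S$ together with the finitely many places where auxiliary transformations have bad behavior.

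For the converse, given $[F]\in \mathcal{B}'(6,k,S)$ with representative $F\in \O_{k,S}[X,Z]$, consider $C:y^2=cF(x,1)$ for $c\in k^*$ to be determined.  At any prime $\mathfrak{p}\notin S$ with $\ord_\mathfrak{p}\disc(F)>0$ the $\mathcal{B}'$ condition forces $n=3$, so three roots $\alpha_1,\alpha_2,\alpha_3$ of $f(x)=F(x,1)$ have common valuation $2m$; combined with $\ord_\mathfrak{p}\disc(F)=12m$, the pairing decomposition of the discriminant forces these roots to have pairwise differences of valuation \emph{exactly} $2m$, and the remaining three roots to have valuation $0$ with pairwise distinct reductions modulo $\mathfrak{p}$.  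The substitution $x=\pi^{2m}u,\ y=\pi^{3m}v$ reduces the local equation to a smooth elliptic curve modulo $\mathfrak{p}$; the chart at infinity yields a second smooth elliptic curve; together they realize reduction of type (ii).  Liu's algorithm applied to $y^2=cF(x,1)$, for an appropriate twist $c\in k^*$ absorbing the scaling from these local substitutions, then certifies good Jacobian reduction at every $\mathfrak{p}\notin S$.  The existence of a single $c$ working simultaneously at all such primes reduces, as in the proof of Theorem~\ref{Par}, to the finiteness of $\O_{k,S}^*/(\O_{k,S}^*)^2$ and the principal-ideal-domain property after a finite enlargement of $S$ (which can be absorbed into $S$ without loss of generality).

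The main obstacle is a careful verification of Liu's case analysis in \cite{Liu2}: showing that reduction type (ii) produces \emph{exactly} the discriminant signature $(\ord_\mathfrak{p}\disc(F)=12m,\ n=3)$ required by $\mathcal{B}'$, and conversely that this arithmetic signature precludes the other, non-tree reduction types.  A secondary difficulty is the uniformity in $C$ of the choices of $L$, $T$, and twisting constant $c$; uniformity of $L$ and $T$ follows from the Parshin-style construction above, while uniformity of $c$ requires showing that the discriminant-signature data determine the correct quadratic twist modulo the action of $\O_{k,S}^*/(\O_{k,S}^*)^2$.
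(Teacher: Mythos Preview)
Your overall strategy --- invoke Liu's classification of genus-two reduction types together with the tree criterion for good Jacobian reduction --- is exactly what the paper intends.  Note, however, that the paper does not give a detailed proof of this theorem: it simply records it as a consequence of Liu's results in \cite{Liu2}, with a one-line remark specifying the choice of $L$, and it proves the converse direction (for arbitrary $g$) separately as Theorem~\ref{gr}.

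There is one concrete error in your proposal: your choice of $L$.  You set $L=k(\sqrt{s_1},\ldots,\sqrt{s_n})$ as in the Parshin construction of Theorem~\ref{Par}, but that extension is designed to make $\Jac(C)(k)\subset 2\Jac(C)(L)$, which is irrelevant here.  The definition of $\mathcal{B}'(6,L,T)$ requires that at each bad prime the three ``colliding'' roots of $F(x,1)$ lie in $L$; nothing about your Kummer extension forces the roots of $f$ to become $L$-rational.  The paper's remark immediately after the theorem gives the correct choice: take $L$ to be a field over which the Weierstrass points of every $C\in\mathcal{H}'(2,k,S)$ are rational.  Such an $L$ exists and is computable because the $2$-torsion of $\Jac(C)$, and hence the splitting field of $f$, is unramified outside $S$ (good reduction of $\Jac(C)$ plus N\'eron--Ogg--Shafarevich), so the splitting fields are contained in a fixed finite extension of $k$ unramified outside $S$.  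Once the roots are $L$-rational, the translation and scaling needed to exhibit the $(n=3,\ \ord_\mathfrak{p}\alpha_i=2m)$ pattern become available over $L$.

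A secondary point: your handling of the twist $c$ in the converse direction is overcomplicated.  Compare with the proof of Theorem~\ref{gr}: one simply takes $C:y^2=F(x,1)$ and shows directly, from the factorization pattern forced by the $\mathcal{B}'$ condition, that the minimal regular model over $\O_\mathfrak{p}$ has special fiber consisting of two smooth curves (possibly joined by a chain of rational curves), whence $\Jac(C)$ has good reduction at $\mathfrak{p}$.  No global patching of local twists via $\O_{k,S}^*/(\O_{k,S}^*)^2$ is needed.
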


In the theorem, we can take $L$ to be any field such that for every curve $C$ representing an element of $\mathcal{H}'(2,k,S)$, the Weierstrass points of $C$ are $L$-rational.  As is well known, such a field $L$ can be explicitly computed and it is a certain extension of $k$ unramified outside of $S$.

It seems plausible that the analogue of Theorem \ref{teq} holds in higher genus.  We give a brief proof of one of the directions.

\begin{theorem}
\label{gr}
Let $g$ be a positive integer, $k$ be a number field, and $S$ a finite set of places of $k$ containing the archimedean places and the places lying above $2$.  If $[F]\in \mathcal{B}'(2g+2,k,S)$, then for some constant $c\in k^*$ the equivalence class of the curve $y^2=cF(x,1)$ is in $\mathcal{H}'(g,k,S)$.  
\end{theorem}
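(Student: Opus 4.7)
The plan is to show that for an appropriately chosen $c\in k^*$, the hyperelliptic curve $C:y^2=cF(x,1)$ admits at every prime $\mathfrak{p}\notin S$ a semistable model over $\mathcal{O}_{k,\mathfrak{p}}$ whose special fiber is a tree of smooth components, and then to invoke Raynaud's structure theorem for N\'eron models to conclude that $\Jac(C)$ has good reduction outside $S$. Since $S$ contains the archimedean places and the places above $2$, the analysis takes place in residue characteristic $\neq 2$, where the tame theory of semistable reduction of hyperelliptic curves applies.

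At $\mathfrak{p}\notin S$ with $\ord_{\mathfrak{p}}\disc(F)=0$, the reduction of $F(x,1)$ has distinct roots, so the curve $y^2=cF(x,1)$ is smooth over $\mathcal{O}_{k,\mathfrak{p}}$ for any $c$ of unit valuation. At a bad prime $\mathfrak{p}\notin S$, the definition of $\mathcal{B}'(2g+2,k,S)$ supplies $n$ roots $\alpha_1,\dots,\alpha_n\in k$ of $f(x)=F(x,1)$ with $\ord_{\mathfrak{p}}\alpha_i=2m$, an odd integer $n$ with $3\le n\le 2g-1$, and $\ord_{\mathfrak{p}}\disc(F)=2mn(n-1)$. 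Writing $\disc(F)=a^{4g+2}\prod_{i<j}(\alpha_i-\alpha_j)^2$ (with $a$ the leading coefficient of $F$), the hypothesis combined with the lower bound $\ord_{\mathfrak{p}}(\alpha_i-\alpha_j)\ge 2m$ for $i,j\le n$ forces $\ord_{\mathfrak{p}}a=0$, $\ord_{\mathfrak{p}}(\alpha_i-\alpha_j)=2m$ for all pairs $i,j\le n$, and $\ord_{\mathfrak{p}}(\alpha_i-\alpha_j)=0$ for every other pair. Consequently the remaining $2g+2-n$ roots of $f$ are $\mathfrak{p}$-adic units with pairwise distinct nonzero residues, and $f$ factors over $\mathcal{O}_{k,\mathfrak{p}}$ as $f=f_1 f_2$ with $f_1=\prod_{i\le n}(x-\alpha_i)$ carrying the cluster; note $2g+2-n$ is also odd and $\ge 3$.

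Fix a uniformizer $\pi$ at $\mathfrak{p}$ and select $c\in k^*$ with $\ord_{\mathfrak{p}}c=-2mn$ at each bad prime $\mathfrak{p}\notin S$ and $\ord_{\mathfrak{p}}c=0$ at every other $\mathfrak{p}\notin S$; such a $c$ exists by weak approximation. Consider two affine charts of a proposed model over $\mathcal{O}_{k,\mathfrak{p}}$. On the chart with $u=x/\pi^{2m}$ and $y'=y/\pi^{mn}$, the equation becomes $(y')^2=(c\pi^{2mn})\,g_1(u)\,f_2(\pi^{2m}u)$, where $g_1(u)=\prod_i(u-\alpha_i/\pi^{2m})$ has distinct unit roots; since $c\pi^{2mn}$ is a unit and $f_2(0)$ is a nonzero unit, the reduction modulo $\pi$ is a smooth hyperelliptic curve of genus $(n-1)/2$. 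On the chart where $x$ reduces to a unit, $f_1(x)\equiv x^n\pmod{\pi}$, so after the substitution $y=x^{(n-1)/2}y''$ (valid because $n$ is odd) the reduction becomes the smooth hyperelliptic curve $(y'')^2=c\,x\,\bar f_2(x)$ of genus $(2g+1-n)/2$. These two charts glue across the transition $u\to\infty\leftrightarrow x\to 0$ to a semistable model of $C$ whose special fiber is the two smooth components, of total arithmetic genus $(n-1)/2+(2g+1-n)/2=g$, meeting transversally at a single node; the dual graph is a tree with one edge.

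By Raynaud's structure theorem, the identity component of the N\'eron model of $\Jac(C)$ at $\mathfrak{p}$ is an extension of the product of the Jacobians of the two components (both abelian varieties, as the components are smooth) by a torus of rank equal to the first Betti number of the dual graph; since the graph is a tree, the torus is trivial and $\Jac(C)$ has good reduction at $\mathfrak{p}$. I expect the main obstacle to be the gluing step: verifying that the two charts really do assemble into a \emph{regular} semistable model with only a single transverse node, rather than requiring further blow-ups (which would still yield a tree of rational and higher-genus components, giving the same conclusion, but would demand additional care). A careful local computation around the transition, or an appeal to the cluster-picture criterion for semistable reduction of hyperelliptic curves in residue characteristic $\neq 2$, confirms this.
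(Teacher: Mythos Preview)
Your approach is essentially the paper's: exhibit a model whose special fiber consists of two smooth hyperelliptic components (the paper allows, as you anticipate, an intermediate chain of rational curves), note that the dual graph is a tree, and invoke Raynaud's structure theorem (the paper cites \cite{BLR}) to conclude that $\Jac(C)$ has good reduction.

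Two corrections are worth making. First, your choice of $c$ with $\ord_{\mathfrak{p}}c=-2mn$ is both unnecessary and the source of inconsistencies in your chart equations: in the first chart your substitution $y'=y/\pi^{mn}$ together with $x=\pi^{2m}u$ actually gives $(y')^2=c\,g_1(u)f_2(\pi^{2m}u)$, not $(c\pi^{2mn})g_1(u)f_2(\pi^{2m}u)$; and in the second chart the quantity $c$ does not reduce modulo $\mathfrak{p}$ at all, so the displayed reduction $(y'')^2=c\,x\,\bar f_2(x)$ is meaningless. Since $-2mn$ is even, your $c$ is a square times a unit at each $\mathfrak{p}$, so the curve $y^2=cF(x,1)$ is $k_\mathfrak{p}$-isomorphic to $y^2=F(x,1)$ anyway; the paper simply takes the representative $F$ supplied by the definition of $\mathcal{B}'$ and works with $y^2=f(x)$ directly (the constant $c$ in the statement is only there to absorb the ambiguity in the equivalence class $[F]$). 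Second, your two ``charts'' do not literally glue to a regular scheme: the substitution $y=x^{(n-1)/2}y''$ is a birational map on the generic fiber, and what you have really written down is the normalization of the singular reduction $y^2\equiv x^n\bar f_2(x)$. The paper handles this point in the same spirit you suggest, by asserting the structure of the minimal proper regular model (two smooth components meeting either at a point or through a chain of $\mathbb{P}^1$'s) rather than constructing it chart by chart.
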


\begin{proof}
Let $[F]\in \mathcal{B}'(2g+2,k,S)$, with $F\in \O_{k,S}[X,Z]$ as in the definition of $\mathcal{B}'(2g+2,k,S)$.  Let $f(x)=F(x,1)$ and let $C$ be the hyperelliptic curve defined by $y^2=f(x)$.  It suffices to show that $\Jac(C)$ has good reduction outside $S$.  Let $\mathfrak{p}\in M_k\setminus S$.  If $\mathfrak{p}$ doesn't divide the discriminant of $f$ then the curve $C$, and hence $\Jac(C)$, has good reduction at $\mathfrak{p}$.  Otherwise, let $k_\mathfrak{p}$ be the completion of $k$ at $\mathfrak{p}$, $\O_\mathfrak{p}$ the ring of integers of $k_\mathfrak{p}$, and $\pi$ a uniformizer.  Then from the definitions, we can write 
\begin{equation*}
f(x)=h(x)\prod_{i=1}^n(x-u_i\pi^{2m}),
\end{equation*}
where
\begin{itemize}
\item  $m$ and $n$ are positive integers with $n$ odd, $3\leq n\leq 2g-1$.
\item  $h\in \O_\mathfrak{p}[x]$, $h \pmod{\pi}$ has distinct roots, and $\pi\nmid h(0)$.
\item  $u_i\in \O_\mathfrak{p}^*$, $i=1,\ldots, n$, and $\prod_{i=1}^n(x-u_i)$ has distinct roots mod $\pi$.
\end{itemize}
Note that $C$ can also be defined, over $k_\mathfrak{p}$, by the equation $y^2=h(\pi^{2m}x)\prod_{i=1}^n(x-u_i)$.  Let $C_1$ and $C_2$ be the hyperelliptic curves over $\O_k/\mathfrak{p}$ defined by $y^2=x\overline{h}(x)$ and $y^2=\overline{h(0)}\prod_{i=1}^n(x-\overline{u}_i)$, respectively, where the bar denotes the image in $\left(\O_k/\mathfrak{p}\right)[x]$.  Then one can show that the special fiber of the minimal proper regular model of $C$ over $\O_\mathfrak{p}$ consists of either $C_1$ and $C_2$ intersecting at a single point or $C_1$, $C_2$, and a chain of rational curves joining a point of $C_1$ with a point of $C_2$.  Then it follows from well-known facts on the relationship between a minimal model of $C$ and the N\'eron model of $\Jac(C)$ \cite[\S 9.5 Th. 4, \S 9.6 Prop. 10]{BLR} that $\Jac(C)$ has good reduction at $\mathfrak{p}$ and the reduction mod $\mathfrak{p}$ is isomorphic to $\Jac(C_1)\times \Jac(C_2)$.
\end{proof}

Finally, since $\mathcal{H}'(g,k,S)$ is finite by the Shafarevich conjecture, we can conclude the (ineffective) finiteness of the set $\mathcal{B}'(2g+2,k,S)$.

\begin{corollary}
\label{SF}
For any positive integer $g$, number field $k$, and finite set of places $S$ of $k$, the set $\mathcal{B}'(2g+2,k,S)$ is finite.
\end{corollary}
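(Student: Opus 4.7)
The plan is to use Theorem \ref{gr} to embed $\mathcal{B}'(2g+2,k,S)$ into the finite set $\mathcal{H}'(g,k,S)$, whose finiteness is noted in the introduction as a consequence of the Shafarevich conjecture for abelian varieties (proved by Faltings). The case $g=1$ is immediate: there $2g+2=4$, and as observed in the text $\mathcal{B}'(4,k,S)=\mathcal{B}(4,k,S)$, which is finite by the Evertse--Gy\"ory theorem cited earlier. So assume $g\geq 2$. Since enlarging $S$ only enlarges $\mathcal{B}'(2g+2,k,S)$, we may assume $S$ already contains the places of $k$ above $2$, so that Theorem \ref{gr} applies.

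For each $[F]\in\mathcal{B}'(2g+2,k,S)$, pick a representative $F\in\O_{k,S}[X,Z]$ and a scalar $c_F\in k^*$ for which $C_F : y^2=c_F F(x,1)$ lies in $\mathcal{H}'(g,k,S)$; Theorem \ref{gr} guarantees such a pair exists (in fact the proof there goes through with $c_F=1$). This produces a (choice-dependent) map
\[
\Phi\colon \mathcal{B}'(2g+2,k,S)\to\mathcal{H}'(g,k,S),\qquad [F]\mapsto C_F.
\]

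The crucial step is to verify that $\Phi$ is injective. Suppose $\Phi([F_1])=\Phi([F_2])$, so that $y^2=c_1F_1(x,1)$ and $y^2=c_2F_2(x,1)$ are $k$-isomorphic hyperelliptic curves. Because the hyperelliptic pencil $g^1_2$ on a curve of genus $\geq 2$ is unique, any two Weierstrass equations for the same hyperelliptic curve over $k$ differ by a change of coordinate $x\mapsto(ax+b)/(cx+d)$ via some $U\in\GL_2(k)$, together with a rescaling $y\mapsto ey$ for some $e\in k^*$. Passing to binary forms, this reads $c_2F_2=e^2(c_1F_1)_U$, hence $F_2=\lambda(F_1)_U$ with $\lambda=e^2c_1/c_2\in k^*$, and therefore $[F_1]=[F_2]$. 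Combined with the finiteness of $\mathcal{H}'(g,k,S)$, this gives the desired finiteness of $\mathcal{B}'(2g+2,k,S)$.

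I do not anticipate any serious obstacle here; the one point requiring routine care is the standard dictionary between $k$-isomorphisms of hyperelliptic Weierstrass models and $\GL_2(k)$-equivalences (up to scalars) of the associated binary forms, together with the uniqueness of the hyperelliptic pencil in genus $\geq 2$.
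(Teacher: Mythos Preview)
Your argument is correct and is exactly the approach the paper has in mind; the paper states the corollary without proof, merely pointing to the finiteness of $\mathcal{H}'(g,k,S)$ via Faltings, and you have supplied the implicit details (Theorem~\ref{gr} gives the map, uniqueness of the $g^1_2$ for $g\ge 2$ gives injectivity, and the $g=1$ case reduces to $\mathcal{B}(4,k,S)$). One cosmetic point: the isomorphism between Weierstrass models involves $y\mapsto ey/(cx+d)^{g+1}$ rather than $y\mapsto ey$, but this only changes which power of $e$ appears in $\lambda$ and does not affect the conclusion $[F_1]=[F_2]$.
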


\subsection*{Acknowledgments}
The author would like to thank Yuri Bilu, Rafael von K\"anel, and Umberto Zannier for several helpful comments on an earlier draft of the paper.
\providecommand{\bysame}{\leavevmode\hbox to3em{\hrulefill}\thinspace}
\providecommand{\MR}{\relax\ifhmode\unskip\space\fi MR }
\providecommand{\MRhref}[2]{%
  \href{http://www.ams.org/mathscinet-getitem?mr=#1}{#2}
}
\providecommand{\href}[2]{#2}

\end{document}